\newtheorem{theorem}{Theorem}[section]
\newtheorem{lemma}[theorem]{Lemma}
\newtheorem{proposition}[theorem]{Proposition}
\newtheorem{corollary}[theorem]{Corollary}
\newtheorem{definition}[theorem]{Definition}
\newtheorem{remark}[theorem]{Remark}
\DeclareMathOperator{\co}{co}
\begin{document}
	
\title{On the geometry of an order unit space}
\author{Anil Kumar Karn}
	
\address{School of Mathematical Sciences, National Institute of Science Education and Research Bhubaneswar, An OCC of Homi Bhabha National Institute, P.O. - Jatni, District - Khurda, Odisha - 752050, India.}

\email{\textcolor[rgb]{0.00,0.00,0.84}{anilkarn@niser.ac.in}}

\subjclass[2020]{Primary: 46B40; Secondary: 46B20, 46L05.}
	
\keywords{Order unit space, matrix order unit space, canopy, periphery, $\infty$-orthogonality}
	
\begin{abstract}
	We introduce the notion of \emph{skeleton} with a head in a non-zero real vector space. We prove that skeletons with heads describe order unit spaces geometrically. Next, we consider the notion of \emph{periphery} corresponding to an order unit space which is a part of the skeleton. We note that periphery consists of boundary elements of the positive cone with unit norms. We discuss some elementary properties of the periphery. We also find a condition under which $V$ would contain a copy of $\ell_{\infty}^n$ for some $n \in \mathbb{N}$ as an order unit subspace. 
\end{abstract}

\maketitle 

\section{Introduction} 

	Let $X$ be a normed linear space and let $x, y \in X$. We say that $x$ is \emph{$\infty$-orthogonal} to $y$, (we write, $x \perp_{\infty} y$), if $\Vert x + k y \Vert = \max \lbrace \Vert x \Vert, \Vert k y \Vert \rbrace$ for all $k \in \mathbb{R}$. It was proved in \cite{K14} that if $(V, e)$ is an order unit space and if $u, v \in V^+ \setminus \lbrace 0 \rbrace$, then $u \perp_{\infty} v$ if and only if $\Vert \Vert u \Vert^{- 1} u + \Vert v \Vert^{- 1} v \Vert = 1$. For $u, v \in V^+$, we say that $u$ is \emph{absolutely $\infty$-orthogonal} to $v$ (we write $u \perp_{\infty}^a v$) if $u_1 \perp_{\infty} v_1$ whenever $0 \le u_1 \le u$ and $0 \le v_1 \le v$.  

	Let $A$ be a unital C$^*$-algebra. Then $p \in A$ is a projection if $p^2 = p = p^*$ or equivalently, $p, 1 - p \in A^+$ and $p (1 - p) = 0$. Following \cite[Theorem 2.1]{K19}, we note that $p$ is a projection if and only if $p, 1 - p \in A^+$ and $p \perp_{\infty}^a (1 - p)$. In this paper, we weaken the notion of projections and consider in stead the notion of \emph{peripheral elements}. Let $(V, e)$ be an order unit space. An element $u \in V$ is said to be a peripheral element if $u, e - u \in V^+$ and we have $u \perp_{\infty} (e - u)$. The set of all peripheral elements together with $0$ and $e$ form the notion of \emph{skeleton with a head} in an order unit space in the following sense. 
\begin{definition}\label{skel}
	Let $X$ be a non-zero real vector space and let $S \subset X$ containing $0$ and $e \ne 0$. We say that $S$ is a \emph{skeleton} with $e$ as its \emph{head}, if the following properties hold. 
	\begin{enumerate}
		\item If $u \in S$, then $e - u \in S$; 
		
		Put $S_0 = S \setminus \lbrace 0, e \rbrace$. 
		\item If $u, v \in S$ and $\lambda \in [0, 1]$, then there exist $w \in S_0$ and $\alpha, \beta \in \mathbb{R}^+$ such that $\lambda u + (1 - \lambda) v = \alpha e + \beta w$; and
		\item If $e = \sum_{i=1}^n \alpha_i u_i$ for some $u_1, \dots, u_n \in S_0$ and $\alpha_1, \dots, \alpha_n > 0$ with $n \ge 2$, then $\sum_{i \ne j} \alpha_i \ge 1$ for all $j = 1, \dots, n$. 
	\end{enumerate} 
	The members of $S_0$ are called the \emph{peripheral} elements of $S$ and $S_0$ is called the \emph{periphery} of $S$. 
\end{definition} 
	Order unit spaces dominate the interface of commutative and non-commutative C$^*$-algebras. In early 1940's, Stone, Kakutani, Krein and Yosida proved independently that if an order unit space $(V, e)$ is a vector lattice in its order structure, then it is unitally lattice isomorphic to a dense lattice subspace of $C_{\mathbb{R}}(X)$ for some suitable compact Hausdorff space $X$ \cite[Theorem II.1.10]{A71}. (see the notes after Section1, Chapter II of \cite{A71} for the details.) In 1951, Sherman proved that the self-adjoint part of a C$^*$-algebra $A$ is a vector lattice in its order structure if and only if $A$ is commutative \cite{S51}. The same year, Kadison prove that the infimum of a pair of self-adjoint operators on a complex Hilbert space exists if and only if they are comparable \cite{K51a}. The same year in another paper, he proved that any unital self-adjoint subspace of a unital C$^*$-algebra is an order unit space \cite{K51b}. (Much later in 1977, Choi and Effros proved that a unital self-adjoint subspace of a unital C$^*$-algebra is precisely a matrix order unit space \cite{CE77}.)

	Soon after Kadison underscored the importance of order unit spaces as a possible role model for a non-commutative ordered spaces, there was a flux of research in the study of order unit spaces and their duals. Some early prominent references are Bonsall, Edwards, Ellis, Asimov and Ng, besides many others. (See  \cite{A68,B55,E64,El64,N69}. We refer to  \cite{A71,Jam70} for more references and details.) 

	The dual of an order unit space is a base normed space which is defined through the geometric notion of a \emph{base} in an ordered vector space. On the other hand, the notion of an order unit is order theoretic. In this paper we propose to study a set of geometric properties that determine order unit spaces. More precisely, in the following result we show that \emph{skeletons} describe order unit spaces geometrically. 
\begin{theorem}\label{skelg}
	Let $X$ be a non-zero real vector space and let $S$ be a skeleton in $X$ with $e$ as its head for some $e \in X$ with $e \ne 0$. Let $V$ be the linear span of $S$ and let $V^+$ be the cone generated by $S$. Then $(V, V^+, e)$ is an order unit space such that 
	$$S_0 := S \setminus \lbrace 0, e\rbrace = \lbrace v \in V^+: \Vert v \Vert = \Vert e - v \Vert = 1 \rbrace.$$ 
	(Here $\Vert\cdot\Vert$ is the order unit norm on $V$.)
\end{theorem}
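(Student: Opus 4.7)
The plan is to derive the three defining properties of an order unit space from the skeleton axioms, and then read off the characterization of $S_0$ from the resulting norm. First I would prove a \emph{normal form}: $V^+ = \{\alpha e + \beta w : \alpha,\beta \ge 0,\ w \in S_0\}$, and (by applying axiom~(2) once more to a decomposition $v = u_+ - u_-$) every $v \in V$ has the shape $ae + bw$ with $a \in \mathbb{R}$, $b \ge 0$, $w \in S_0$. This is a straightforward induction on convex combinations built on axiom~(2). Second, I would establish a crucial \emph{sub-lemma}: $S_0$ contains no nonzero scalar multiple of $e$. Indeed, if $\mu e \in S_0$ with $\mu \ne 0,1$, axiom~(1) puts $(1-\mu)e$ in $S_0$ too, and the linear relation $\alpha_1 \mu + \alpha_2(1-\mu) = 1$ in positive $\alpha_i$ admits solutions with one of $\alpha_1, \alpha_2$ strictly less than $1$, directly violating axiom~(3).

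The three order unit space axioms then follow. \emph{Order unit}: $(\alpha+\beta)e - (\alpha e + \beta w) = \beta(e - w) \in V^+$. \emph{Properness}: if $v, -v \in V^+$ with normal forms $\alpha_i e + \beta_i w_i$, summing and using $-\beta_i w_i = \beta_i(e - w_i) - \beta_i e$ gives
\[ (\alpha_1 + \alpha_2 + \beta_1 + \beta_2)\, e = \beta_1(e - w_1) + \beta_2(e - w_2). \]
When both $\beta_i > 0$, dividing by $\alpha_1 + \alpha_2 + \beta_1 + \beta_2$ expresses $e$ as a two-element positive combination of $e - w_1, e - w_2 \in S_0$ whose coefficient sum is $\le 1$, while axiom~(3) forces that sum to be $\ge 2$; when exactly one $\beta_i$ is positive the same equation makes some $w_i$ a non-positive multiple of $e$, contradicting the sub-lemma. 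So $\beta_1 = \beta_2 = 0$ and $v = 0$. \emph{Archimedean}: assume $v = ae + bw$ with $b > 0$ and $v + \epsilon_0 e = \alpha' e + \beta' w' \in V^+$ yet $a + \epsilon_0 < 0$; then $bw = Ce + \beta' w'$ with $C = \alpha' - (a+\epsilon_0) > 0$, and either $\beta' = 0$ (forcing $w$ to be a positive multiple of $e$, contradicting the sub-lemma) or $\beta' > 0$, in which case $e = \tfrac{b}{C+\beta'} w + \tfrac{\beta'}{C+\beta'}(e - w')$ has second coefficient $<1$, violating axiom~(3). Hence $a \ge 0$ and $v \in V^+$.

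Finally, the same cone argument computes the order unit norm: $\|\alpha e + \beta w\| = \alpha + \beta$ for $\alpha, \beta \ge 0$ and $w \in S_0$, because $(\alpha e + \beta w) \le te$ reduces to $(t - \alpha - \beta) e + \beta(e - w) \in V^+$, which holds iff $t \ge \alpha + \beta$. For $v \in S_0$, the representations $v = 0 \cdot e + 1 \cdot v$ and $e - v = 0 \cdot e + 1 \cdot (e - v)$ immediately give $\|v\| = \|e - v\| = 1$. Conversely, if $v \in V^+$ with $\|v\| = \|e - v\| = 1$ and $v = \alpha e + \beta w$, then $\alpha + \beta = 1$ and $e - v = \beta(e - w)$ has norm $\beta$, hence $\beta = 1$, $\alpha = 0$, and $v = w \in S_0$. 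The main obstacle is the sub-lemma and its repeated reuse: properness, the Archimedean property, and the norm formula all ultimately come down to selecting a two-element positive decomposition of $e$ that contradicts axiom~(3).
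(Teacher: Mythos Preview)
Your argument is correct and notably more direct than the paper's. The paper first develops a substantial geometric apparatus: it proves that $K=\co(S)$ is convex with $\operatorname{Lead}(K)=C=\bigcup_{u\in S_0}[e,u]$ and that $e$ is an extreme point of $K$ (Theorem~\ref{convK}), establishes a uniqueness-of-representation lemma (Lemma~\ref{unique}), and characterizes membership in $K$ (Proposition~\ref{charK}). Properness is then deduced from the extremality of $e$, and the Archimedean property is obtained via a rather heavy sequence-and-limit argument in which $v_n=\frac{1}{1+\Vert v\Vert}\bigl(\tfrac{1}{n+1}e+\tfrac{n}{n+1}v\bigr)$ is written in normal form, the coefficients are compared using Lemma~\ref{unique}, and one passes to the limit $n\to\infty$.

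Your route bypasses all of this. Once the normal form $v=\alpha e+\beta w$ and the sub-lemma $S_0\cap\mathbb{R}e=\emptyset$ are in hand, you reduce each of properness, the Archimedean property, and the norm formula $\Vert\alpha e+\beta w\Vert=\alpha+\beta$ to the single mechanism ``write $e$ as a two-term positive combination from $S_0$ with one coefficient $<1$, contradicting axiom~(3)''. This is both shorter and conceptually cleaner; in particular your Archimedean argument is purely algebraic and avoids any limiting process. What the paper's approach buys is the intermediate geometric picture (canopy, lead points, the identity $K=[0,e]$) that is reused later in the paper, whereas your approach extracts exactly what is needed for Theorem~\ref{skelg} and no more. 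Two minor remarks: in the sub-lemma you should state explicitly that for every $\mu\ne 0,1$ one can choose $\alpha_1,\alpha_2>0$ with $\alpha_1\mu+\alpha_2(1-\mu)=1$ and $\min(\alpha_1,\alpha_2)<1$ (this is elementary but not literally ``direct''); and the degenerate case $S_0=\emptyset$ (so $V=\mathbb{R}e$) should be noted separately, since your normal form then has $\beta=0$.
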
 
We also prove the converse of this result. 
\begin{theorem}\label{oskel}
	Let $(V, e)$ be an order unit space. Put 
	$$(S_V)_0 := C_V \cap (e - C_V) = \lbrace u \in V: \Vert u \Vert = \Vert e - u \Vert = 1 \rbrace.$$ 
	Then $S_V := (S_V)_0 \bigcup \lbrace 0, e \rbrace$ is a skeleton in $V$ with $e$ as its head such that $(V, e)$ is the order unit space generated by $S_V$. 
\end{theorem}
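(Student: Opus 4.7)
The plan is to verify the three axioms of Definition \ref{skel} for $S_V$, and then to check that the order unit space generated by $S_V$ via Theorem \ref{skelg} recovers $(V, V^+, e)$. Axiom (1) is immediate: if $u \in (S_V)_0$, then $\Vert e - u \Vert = 1$ and $\Vert e - (e - u)\Vert = \Vert u \Vert = 1$, so $e - u \in (S_V)_0$; and the endpoints $0$ and $e$ are swapped under $u \mapsto e - u$.

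The technical core is Axiom (2), which I handle via a \emph{peripheral decomposition} of positive elements. For $z \in V^+$, set $m(z) := \sup\{\alpha \in \R : z - \alpha e \in V^+\}$; by the Archimedean property, $z - m(z)e \in V^+$ and $m(z - m(z)e) = 0$. Using the representation $\Vert y \Vert = \sup\{\phi(y) : \phi \text{ a state on } V\}$ for $y \in V^+$ (attained by $w^*$-compactness of the state space), a short computation gives $\Vert z - m(z)e\Vert = \Vert z \Vert - m(z)$; and when $\Vert z \Vert > m(z)$, the element $w := (z - m(z)e)/(\Vert z \Vert - m(z))$ lies in $V^+$, has $\Vert w\Vert = 1$, and satisfies $\Vert e - w\Vert = 1 - m(w) = 1$, placing it in $(S_V)_0$. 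Thus $z = m(z)\, e + (\Vert z \Vert - m(z))\, w$; applying this to $z := \lambda u + (1 - \lambda) v \in V^+$ yields Axiom (2). The degenerate subcase $\Vert z \Vert = m(z)$ (i.e., $z \in \R^+ e$) is handled with $\beta = 0$ and any $w \in (S_V)_0$; when $(S_V)_0 = \emptyset$ the situation reduces to the trivial $V = \R e$.

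Axiom (3) is a state-separation argument. Suppose $e = \sum_{i=1}^n \alpha_i u_i$ with $u_i \in (S_V)_0$ and $\alpha_i > 0$, and fix $j$. Since $\Vert e - u_j \Vert = 1$ with $e - u_j \in V^+$, the supremum $\sup_\phi \phi(e - u_j)$ over states is attained at some state $\phi$, giving $\phi(e - u_j) = 1$; hence $\phi(u_j) = 0$ and $\phi(u_i) \in [0, 1]$ for $i \ne j$. Evaluating $\phi$ on $e = \sum_i \alpha_i u_i$ yields $1 = \sum_{i \ne j} \alpha_i \phi(u_i) \le \sum_{i \ne j} \alpha_i$, as required.

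Finally, the peripheral decomposition shows $V^+ \subseteq \cone(S_V)$, and the reverse inclusion is immediate from $S_V \subseteq V^+$. Since $z + \Vert z \Vert e \in V^+$ for every $z \in V$, the linear span of $S_V$ equals $V$, so Theorem \ref{skelg} applied to $S_V$ returns $(V, V^+, e)$. The main obstacle I anticipate is the careful bookkeeping in Axiom (2) across the endpoint cases (when $u$ or $v$ lies in $\{0, e\}$, or when $\Vert z \Vert = m(z)$); once the peripheral decomposition is in hand, the remaining verifications are routine.
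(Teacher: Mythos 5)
Your proof is correct and follows essentially the same route as the paper: axiom (1) is immediate, axiom (3) is the same state-separation argument (a state vanishing at $u_j$, cf.\ Lemma \ref{periphery}), and your peripheral decomposition $z = m(z)e + (\Vert z \Vert - m(z))w$ with $m(z) = \inf_{\phi} \phi(z)$ is exactly what the paper extracts from Proposition \ref{1}(1)(d) and Corollary \ref{2}(5) after normalizing $z = \lambda u + (1-\lambda)v$ into the canopy $C_V$. The only cosmetic difference is that you re-derive this decomposition directly from the state-space description of the order unit norm rather than citing the canopy machinery, and you additionally make explicit the (shared, harmless) degeneracy when $\dim V = 1$ and $(S_V)_0 = \emptyset$.
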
 

	Next, we discuss the \emph{periphery} corresponding to an order unit space. We find that the periphery is consists of maximal elements of a canopy in a certain sense. The periphery includes projections whenever they exist. We discuss some elementary properties of the periphery. Using these properties, we prove that any order unit space $(V, e)$ of dimension more than $1$ contains a copy of $\ell_{\infty}^2$ as an order unit subspace. We also prove that $V$ is a union these copies in such a way any two such subspace meet at the \emph{axis} $\mathbb{R} e$. Further we find a condition under which $V$ would contain a copy of $\ell_{\infty}^n$ for some $n \in \mathbb{N}$ as an order unit subspace. 

	The scheme of the paper is as follows. In Section 2, we discuss some of the properties of \emph{skeleton} in a non-zero real vector space and prove Theorem \ref{skelg}. In Section 3, we prove Theorem \ref{oskel}. In Section 4, we study some elementary properties of the periphery corresponding to an order unit space. In Section 5, we find a condition under which an order unit space would contain a copy of $\ell_{\infty}^n$ for some $n \in \mathbb{N}$ as an order unit subspace besides some other results. 

\section{The skeleton} 

	In this section we shall prove Theorem \ref{skelg}. We begin with some preliminary results. Throughout in this section, we shall assume that $X$ is a non-zero real vector space and $S$ is a skeleton in $X$ with $e \ne 0$ as its head (see Definition \ref{skel}). First of all, we prove some of the easy consequences of Definition \ref{skel}. 
\begin{lemma}\label{propS}
	\begin{enumerate}
		\item $S_0 \bigcap [0, 1] e = \emptyset$.\label{skl1}
		\item \label{rim2} If $u, v \in S_0$ and $\alpha \in [0, 1]$, then there exist $w \in S_0$ and $\lambda, \mu \in \mathbb{R}^+$ with $\lambda \le \min \lbrace \alpha, 1 - \alpha \rbrace$ and $\lambda + \mu \le 1$ such that $\alpha u + (1 - \alpha) v = \lambda e + \mu w$. 
		\item Let $u, v \in S_0$ be such that $e = \alpha u + \beta v$ for some $\alpha, \beta \in \mathbb{R}^+$. Then $\alpha = 1 = \beta$. \label{n1}
		\item Let $u, v \in S_0$ such that $\alpha u = \beta v$ for some $\alpha, \beta \in \mathbb{R}$. \label{li}
		\begin{enumerate}[$(a)$] 
			\item Then $\alpha \beta \ge 0$;
			\item $\alpha = 0$ if and only if $\beta = 0$; 
			\item If $\alpha \beta > 0$. Then $u = v$. 
		\end{enumerate}
		\item Let $u_1, \dots, u_n$ be distinct elements of $S_0$ and $\alpha_0, \alpha_1, \dots, \alpha_n \in \mathbb{R}^+$ such that $\alpha_0 e + \sum_{i=1}^n \alpha_i u_i = 0$. Then $\alpha_i = 0$ for each $i = 0, 1, \dots, n$. \label{pos} 
	\end{enumerate} 
\end{lemma}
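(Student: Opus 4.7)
The plan is to prove items~(1)--(5) in the order (1), (4), (5), (3), (2), each relying on the recurring move of building an auxiliary decomposition of $e$ from elements of $S_0$ and then invoking property~(3) of Definition~\ref{skel}. For~(1), if $u = te \in S_0$ with $t \in (0,1)$ (the endpoints being excluded since $u \ne 0, e$), then $e = (1/t) u$ gives the two-term expression $e = (1/t - 1/2) u + (1/2) u$ (with $u_1 = u_2 = u$); the bound $\sum_{i \ne 1} \alpha_i = 1/2 \ge 1$ demanded by~(3) of Definition~\ref{skel} fails, contradiction. Combining with property~(1) of Definition~\ref{skel} applied to $e - u$ upgrades this to the stronger fact that \emph{no} scalar multiple of $e$ lies in $S_0$, which is used silently in the rest.

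For~(4)(b), $v \ne 0$ makes the equivalence immediate. For~(4)(c), write $u = cv$ with $c = \beta/\alpha > 0$ and consider $e = cv + (e-u)$: either $v = e-u$ makes $v = e/(c+1)$ a scalar multiple of $e$ (ruled out by the extension of~(1)), or $v, e-u$ are distinct elements of $S_0$ and~(3) of Definition~\ref{skel} forces $c \ge 1$; the symmetric argument gives $c \le 1$. For~(4)(a), with $\alpha > 0 > \beta$, the identity $\alpha(e-u) + (-\beta)(e-v) = (\alpha - \beta) e$ yields a two-term decomposition of $e$ whose coefficients, via~(3) of Definition~\ref{skel}, force $\beta \ge 0$, contradiction. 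For~(5), rearrange $\alpha_0 e + \sum \alpha_i u_i = 0$ into $\sum_{i \ge 1} \alpha_i (e - u_i) = \bigl(\sum_{i \ge 0} \alpha_i\bigr) e$; letting $I$ be the support of $(\alpha_i)_{i \ge 1}$, the case $|I| = 1$ makes $u_i$ a scalar multiple of $e$, while $|I| \ge 2$ leads, via summing the $|I|$ inequalities of~(3) of Definition~\ref{skel}, to $\sum_{i \in I} \alpha_i/S \ge |I|/(|I|-1) > 1$ against the immediate bound $\le 1$. For~(3), property~(3) of Definition~\ref{skel} first gives $\alpha, \beta \ge 1$, and then the explicit identity
\[
e = \frac{\alpha}{\alpha + \beta - 1}(e - u) + \frac{\beta}{\alpha + \beta - 1}(e - v)
\]
(verified by direct substitution) combined with a second application of~(3) of Definition~\ref{skel} pins down $\alpha, \beta \le 1$, hence $\alpha = \beta = 1$.

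Item~(2) is the most delicate and is where I expect the main obstacle. Applying~(2) of Definition~\ref{skel} in parallel to the pairs $(u, v)$ and $(e-u, e-v)$ at parameter $\alpha$ produces decompositions $\alpha u + (1-\alpha) v = \lambda e + \mu w$ and $\alpha(e-u) + (1-\alpha)(e-v) = \lambda' e + \mu' w'$; adding gives $(1 - \lambda - \lambda') e = \mu w + \mu' w'$. A sign analysis of $1 - \lambda - \lambda'$ (killing the negative case via~(5) and handling the positive case via item~(3) above) forces either $\mu = \mu' = 0$ together with $v = e - u$ and $\alpha = \lambda = 1/2$, or $\mu = \mu' = 1 - \lambda - \lambda' > 0$ together with $w' = e - w$; in either case the bound $\lambda + \mu = 1 - \lambda' \le 1$ reads off directly. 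The upper bound $\lambda \le \min\{\alpha, 1 - \alpha\}$ is the hard part: it splits into the edge case $v = e - u$ (where the explicit formula $\alpha u + (1-\alpha)(e-u) = \alpha e + (1 - 2\alpha)(e-u)$ for $\alpha \le 1/2$ and its mirror for $\alpha \ge 1/2$ realize the bound with equality), the boundary subcase $w \in \{u, v, e-u, e-v\}$ (each possibility collapses via item~(3) to a two-term decomposition of $\lambda e$ pinning down $\lambda = \min\{\alpha, 1-\alpha\}$), and the generic subcase in which one rearranges to the auxiliary decomposition $(1 - \lambda) e = \alpha(e-u) + (1-\alpha)(e-v) + \mu w$ and invokes~(3) of Definition~\ref{skel}, possibly after replacing the decomposition supplied by~(2) of Definition~\ref{skel} with one of smaller $\lambda$. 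This generic subcase is the principal step requiring care.
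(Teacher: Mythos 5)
Your treatments of items (1), (3), (4) and (5) are sound and essentially coincide with the paper's arguments: each rests on manufacturing a positive decomposition of $e$ from elements of $S_0$ (complementing via Definition \ref{skel}(1) where needed) and then reading off the inequalities of Definition \ref{skel}(3). Your version of (1) via $e = (1/t - 1/2)u + (1/2)u$ and your summed-inequalities count in (5) are minor variants that work, and the observation that $S_0 \cap \mathbb{R}e = \emptyset$ is indeed needed later. The problem is item (2), which you yourself flag as the delicate point, and the gap there is real. The auxiliary identity you propose for the generic subcase, $(1-\lambda)e = \alpha(e-u) + (1-\alpha)(e-v) + \mu w$, is algebraically correct, but after normalising and applying Definition \ref{skel}(3) it yields $\lambda \ge \alpha - \mu$, $\lambda \ge (1-\alpha) - \mu$ and $\lambda \ge 0$ --- lower bounds on $\lambda$, not the required upper bound $\lambda \le \min\{\alpha, 1-\alpha\}$. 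Your fallback of ``replacing the decomposition supplied by \ref{skel}(2) with one of smaller $\lambda$'' is not available: by Lemma \ref{unique} the coefficient of $e$ in a representation $\lambda e + \mu w$ with $\mu \ge 0$, $w \in S_0$ is uniquely determined, so there is no decomposition with smaller $\lambda$ to pass to. As it stands, the principal bound of item (2) is not established.

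The fix is to complement $w$ rather than $u$ and $v$. From $\alpha u + (1-\alpha)v = \lambda e + \mu w$ write $\mu w = \mu e - \mu(e-w)$ and rearrange to
\[
(\lambda + \mu)\, e \;=\; \alpha u + (1-\alpha)v + \mu(e-w),
\]
a decomposition of a positive multiple of $e$ into the elements $u$, $v$, $e-w$ of $S_0$. Applying Definition \ref{skel}(3) with $j$ equal to the index of $v$, of $u$, and of $e-w$ gives, respectively, $\lambda + \mu \le \alpha + \mu$, $\lambda + \mu \le (1-\alpha) + \mu$, and $\lambda + \mu \le \alpha + (1-\alpha) = 1$, i.e.\ exactly $\lambda \le \min\{\alpha, 1-\alpha\}$ and $\lambda + \mu \le 1$ in one stroke (degenerate cases $\lambda + \mu = 0$ or vanishing coefficients being disposed of by items (1) and (5)). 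This is the paper's argument; it also makes your parallel decomposition of $\alpha(e-u) + (1-\alpha)(e-v)$ and the ensuing case analysis unnecessary.
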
 
\begin{proof}
	$(1)$ Let $\alpha e \in S_0$ for some $\alpha \in (0, 1)$. Then by \ref{skel}(1), $(1 - \alpha) e \in S_0$. Since $e = \alpha e + (1 - \alpha) e$, by \ref{skel}(3), we get $\alpha \ge 1$ and $1 - \alpha \ge 1$ which is absurd. 
	
	$(2)$ By \ref{skel}(2), we have $\alpha u + (1 - \alpha) v = \lambda e + \mu w$ for some $w \in $ and $\lambda, \mu \in \mathbb{R}^+$. Thus $(\lambda + \mu) e = \alpha u + (1 - \alpha) v + \mu (e - w)$. Now by \ref{skel}(3), we get $\lambda + \mu \le 1$, $\lambda + \mu \le \alpha + \mu$ and $\lambda + \mu \le 1 - \alpha + \mu$. Thus $\lambda \le \min \lbrace \alpha, 1 - \alpha \rbrace$ and $\lambda + \mu \le 1$. 
	
	$(3)$ By \ref{skel}(3), we have $\alpha \ge 1$ and $\beta \ge 1$. Now, $\alpha (e - u) + \beta (e - v) = (\alpha + \beta - 1) e$ and by \ref{skel}(1), $e - u, e - v \in S_0$. Thus invoking \ref{skel}(3) once again, we get $\alpha \ge \alpha + \beta - 1$ and $\beta \ge \alpha + \beta - 1$. Thus $\alpha \le 1$ and $\beta \le 1$ so that $\alpha = 1 = \beta$. 
	
	$(4)(a)$ Let $\alpha \beta < 0$. For definiteness, we assume that $\alpha > 0$ and $\beta < 0$. Then $\alpha (e - u) - \beta (e - v) = (\alpha - \beta) e$ with $\alpha - \beta > 0$. Thus by condition \ref{skel}(3), we get $\alpha, - \beta \ge \alpha - \beta$. But then $\alpha = 0 = \beta$ which contradicts the assumption. Thus $\alpha \beta \ge 0$. 
	
	$(4)(b)$ follows immediately as $0 \notin S_0$. 
	
	$(4)(c)$ Now assume that $\alpha \beta > 0$. For definiteness, we assume that $\alpha > 0$ and $\beta > 0$. Further, without any loss of generality, we may assume that $\alpha \le \beta$. Put $\frac{\alpha}{\beta} = \lambda$. Then $0 < \lambda \le 1$ and $\lambda u = v$. Thus $e = \lambda u + (e - v)$. Now by \ref{skel}(3), we get $\lambda \ge 1$ so that $\lambda = 1$. Thus $u = v$. 
	
	$(5)$ We have $\sum_{i=1}^n \alpha_i (e - u_i) = (\sum_{i=0}^n \alpha_i) e$. Assume, if possible that $\sum_{i=0}^n \alpha_i > 0$. Then by \ref{skel}(3), we get $\sum_{i=0}^n \alpha_i \le (\sum_{i=1}^n \alpha_i) - \alpha_j$ for all $j = 1, \dots, n$. In other words, $\alpha_0 + \alpha_j = 0$ for all $j = 1, \dots, n$. Therefore, $\alpha_j = 0$ for every $j = 0, 1, \dots, n$. 
\end{proof}
\begin{lemma}\label{unique}
	Let $u, v \in S_0$ be such that $\alpha e + \beta u = \gamma e + \delta v$ for some $\alpha, \beta, \delta, \gamma \in \mathbb{R}$ with $\beta, \delta \ge 0$. Then $\alpha = \gamma$ and we have either $\beta = 0 = \delta$ or $u = v$. 
\end{lemma}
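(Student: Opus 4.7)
The plan is to separate the claim into two parts: first to establish $\alpha = \gamma$, after which the hypothesis collapses to $\beta u = \delta v$ and Lemma \ref{propS}(4) yields the required dichotomy ($\beta = 0 = \delta$ or $u = v$). All the substantive work lies in showing $\alpha = \gamma$.

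As a preliminary, I would record the observation that \emph{no element of $S_0$ is a scalar multiple of $e$}. For if $w = c e \in S_0$, then $c \notin [0,1]$ by Lemma \ref{propS}(1); and for $c > 1$ or $c < 0$, Definition \ref{skel}(1) places $e - w = (1-c) e$ in $S_0$ as well, so the identity $(1-c)\,w = c\,(e - w)$ violates Lemma \ref{propS}(4)(a) because $c(1-c) < 0$. This fact is the workhorse of the argument.

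To prove $\alpha = \gamma$, I would rewrite the hypothesis as $(\alpha - \gamma) e = \delta v - \beta u$ and assume, for contradiction and by symmetry, that $\alpha > \gamma$. The easy subcases $\beta = 0$ or $\delta = 0$ immediately force $v$ or $u$ to be a scalar multiple of $e$, contradicting the preliminary observation. The substantive subcase is $\beta, \delta > 0$: here the key move is to add $\beta(e - u)$ to both sides, producing
$$(\alpha - \gamma + \beta)\, e \;=\; \delta v + \beta (e - u),$$
which, after dividing by the positive scalar $\alpha - \gamma + \beta$, exhibits $e$ as a positive combination of the two elements $v,\, e-u \in S_0$. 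Definition \ref{skel}(3) then demands $\beta \ge \alpha - \gamma + \beta$, i.e., $\alpha \le \gamma$, contradicting $\alpha > \gamma$ --- provided $v \ne e - u$. The exceptional possibility $v = e - u$ must be disposed of directly: substituting back into the hypothesis yields $(\beta + \delta) u = (\gamma + \delta - \alpha) e$, which again makes $u$ a scalar multiple of $e$, contradicting the preliminary observation.

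The main obstacle is the case $\beta, \delta > 0$ in the proof of $\alpha = \gamma$; pinpointing the right positive combination to manufacture a representation of $e$ in $S_0$ is the only non-routine step, after which Definition \ref{skel}(3) cleanly delivers the contradiction. Once $\alpha = \gamma$ is in hand, the dichotomy follows at once from parts (b) and (c) of Lemma \ref{propS}(4).
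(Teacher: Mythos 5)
Your proof is correct and follows essentially the same route as the paper: assume $\alpha\ne\gamma$, rewrite the identity as a positive representation of $e$ by two elements of $S_0$ (adding $\beta(e-u)$ or $\delta(e-v)$ as appropriate), invoke Definition \ref{skel}(3) to force $\alpha\le\gamma$, and then finish with Lemma \ref{propS}(4). The only difference is that you treat the degenerate subcases ($\beta=0$, $\delta=0$, or $v=e-u$) explicitly via the observation $S_0\cap\mathbb{R}e=\emptyset$, which the paper passes over more quickly; this is harmless extra care rather than a different argument.
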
 
\begin{proof}
	First, we show that $\alpha = \gamma$. If $\alpha < \gamma$, then $(\gamma - \alpha + \delta) e = \beta u + \delta (e - v)$. By Lemma \ref{propS}(\ref{pos}), we must have $\gamma - \alpha + \delta > 0$ as $\delta \ge 0$. Thus by \ref{skel}(3), we have $\gamma - \alpha + \delta \le \delta$. But then we arrive at a contradiction, $\gamma \le \alpha$. Thus $\alpha \ge \gamma$. Now, by symmetry, we have $\gamma \ge \alpha$ so that $\alpha = \gamma$. Thus $\beta u = \delta v$. The rest of the proof follows from Lemma \ref{propS}(\ref{li}). 
\end{proof}
\begin{proposition}\label{intrc}
	For $u \in S_0$ we consider 
	$$K(u) := \co \lbrace 0, e, u \rbrace = \bigcup_{\alpha \in [0, 1]} \alpha [e, u].$$ 
	\begin{enumerate}[$(a)$]
		\item For $u, v \in S_0$ with $u \ne v$, we have $K(u) \cap K(v) = [0, 1] e$. 
		\item For $u \in S_0$, we have $K(e - u) = e - K(u)$.
	\end{enumerate}
\end{proposition}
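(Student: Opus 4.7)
The plan is to reduce both parts to a single structural fact: every element of $K(u)$ admits a canonical form $s e + r u$ with $s, r \ge 0$ and $s + r \le 1$. First I would establish this parametrization by observing that an arbitrary convex combination $\alpha \cdot 0 + \beta e + \gamma u$ with $\alpha + \beta + \gamma = 1$ and $\alpha, \beta, \gamma \ge 0$ rewrites in the displayed form (with $s = \beta$, $r = \gamma$), and conversely that given $s, r \ge 0$ with $s + r \le 1$ one can take $\alpha = 1 - s - r$. This yields
\[
K(u) = \{\, s e + r u : s, r \ge 0,\ s + r \le 1 \,\}.
\]
Equivalently, this matches the second description in the statement by setting $\alpha = s + r$ and $t = s/\alpha$ when $\alpha > 0$.

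For part (a), the inclusion $[0,1] e \subset K(u) \cap K(v)$ is immediate by taking $r = 0$ in the parametrization. For the reverse inclusion, I would take $x \in K(u) \cap K(v)$ and write $x = s_1 e + r_1 u = s_2 e + r_2 v$ with all four coefficients nonnegative and $s_i + r_i \le 1$. Since $r_1, r_2 \ge 0$, Lemma \ref{unique} applies directly and delivers $s_1 = s_2$ together with the alternative that either $r_1 = r_2 = 0$ or $u = v$. The standing assumption $u \ne v$ rules out the second alternative, so $x = s_1 e$, and $0 \le s_1 \le s_1 + r_1 \le 1$ places $x$ in $[0,1] e$.

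For part (b), I would carry out a short change of variables on the parametric form. Writing
\[
K(e - u) = \{\, s e + r(e - u) : s, r \ge 0,\ s + r \le 1 \,\} = \{\, (s + r) e - r u : s, r \ge 0,\ s + r \le 1 \,\}
\]
and
\[
e - K(u) = \{\, (1 - s') e - r' u : s', r' \ge 0,\ s' + r' \le 1 \,\},
\]
I would substitute $t = s + r$ in the first description and $t = 1 - s'$ in the second. Both expressions then collapse to the common set $\{\, t e - r u : 0 \le r \le t \le 1 \,\}$, proving the equality.

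I do not anticipate any real obstacle: Lemma \ref{unique} has been engineered precisely to handle identifications of the form needed in (a), and (b) is pure bookkeeping once the parametrization is in place. The only mild point of care in (a) is that Lemma \ref{unique} allows one or both of the coefficients on $u, v$ to vanish, so the degenerate cases where $x$ already lies on the axis $[0,1]e$ in one of its two representations are covered without extra work.
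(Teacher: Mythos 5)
Your proof is correct and follows essentially the same route as the paper's: part (a) hinges on identifying the coefficients in the two representations of a common point of $K(u)\cap K(v)$ (the paper re-derives the step $\alpha=\gamma$ inline from condition (3) of Definition \ref{skel} and then applies Lemma \ref{propS}(\ref{li}), whereas you invoke Lemma \ref{unique} directly, which is the cleaner citation since that lemma encodes exactly this identification), and part (b) is the same elementary change-of-variables computation on the parametrization of $\co\lbrace 0,e,u\rbrace$. No gaps.
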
 
\begin{proof}
	$(a)$ Let $w \in K(u) \cap K(v)$. Then there exist $\alpha, \beta, \gamma, \delta \in [0, 1]$ with $\alpha + \beta \le 1$ and $\gamma + \delta \le 1$ such that $w = \alpha e + \beta u = \gamma e + \delta v$. If possible, assume that $\alpha \ne \gamma$. For definiteness, we let $\alpha > \gamma$. Then $(\alpha - \gamma + \beta) e = \beta (e - v) + \delta v$. Thus by \ref{skel}(3), we get $\beta \ge \alpha - \gamma + \beta$ so that $\alpha \le \gamma$. This contradicts the assumption. Hence $\alpha = \gamma$ so that $\beta u = \delta v$. Now by Lemma \ref{propS}(\ref{li}), we have $\beta = 0 = \delta$ as $u \ne v$. Thus $w \in [0, 1] e$, that is, $K(u) \cap K(v) \subset [0, 1] e$. As $[0, 1] e \subset K(x)$ for any $x \in S_0$, the proof is complete. 
	
	$(b)$ Let $w \in K(e - u)$. Then $w = \alpha e + \beta (e - u)$ for some $\alpha, \beta \in [0, 1]$ with $\alpha + \beta \le 1$. Thus $e - w = (1 - \alpha - \beta) e + \beta u$. Since $1 - \alpha - \beta, \beta \in [0, 1]$ and $1 - \alpha - \beta + \beta = 1 - \alpha \le 1$, we get that $e - w \in K(u)$. Thus $K(e - u) \subset K(u)$ for all $u \in S_0$. So for any $u \in S_0$, we also have $K(u) = K(e - (e - u)) \subset K(e - u)$. Therefore, $K(e - u) = K(u)$ for all $u \in S_0$. 
\end{proof} 
\begin{corollary}\label{intrr}
	For $u, v \in S_0$ with $u \ne v$, we have $[e, u] \bigcap [e, v] = \lbrace e \rbrace$.
\end{corollary}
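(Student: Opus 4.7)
The plan is to apply Lemma \ref{unique} directly. The inclusion $\{e\} \subset [e,u] \cap [e,v]$ is obvious, so it suffices to prove the reverse containment. I would pick any $w \in [e,u] \cap [e,v]$ and write $w = (1-s)e + su = (1-t)e + tv$ for some $s, t \in [0,1]$. The coefficients $s$ and $t$ of $u$ and $v$ are both non-negative, so Lemma \ref{unique} applies to the identity $(1-s)e + su = (1-t)e + tv$ and yields both $1-s = 1-t$ and the dichotomy that either $s = t = 0$ or $u = v$. Since $u \ne v$ by hypothesis, the second alternative is excluded, so $s = t = 0$ and hence $w = e$.

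There is essentially no obstacle on this route: the corollary is a direct specialization of Lemma \ref{unique} to the case where both representations are convex combinations of $e$ and a single element of $S_0$, so the only work is the bookkeeping check that the coefficients $s, t$ satisfy the non-negativity hypothesis of Lemma \ref{unique}.

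If one prefers a less lemma-dependent argument, one can instead route through Proposition \ref{intrc}(a): since the parametrization of $[e,u]$ as $(1-s)e + su$ with $s \in [0,1]$ already puts $[e,u]$ inside $K(u) = \co \lbrace 0, e, u \rbrace$, and similarly for $[e,v]$, one gets $[e,u] \cap [e,v] \subset K(u) \cap K(v) = [0,1]e$. It then remains to show that $[e,u] \cap [0,1]e = \lbrace e \rbrace$, which would follow from a short case analysis using Lemma \ref{propS}(1) to rule out any interior point of $[e,u]$ being a scalar multiple of $e$. I would present the first route since it is shorter, avoids this extra step, and makes the role of Lemma \ref{unique} transparent.
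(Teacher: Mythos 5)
Your primary argument is correct, and it takes a different (and in fact shorter) route than the paper. The paper's proof is essentially your second, ``less lemma-dependent'' route: it first observes that no point of $[e,u]$ other than $e$ can lie on $[0,1]e$ (writing $\alpha e = (1-\lambda)e + \lambda u$ gives $\lambda(e-u) = (1-\alpha)e$, which forces $\lambda = 0$ and $\alpha = 1$ since $e - u \in S_0$ is not a multiple of $e$), and then invokes $[e,u] \subset K(u)$ together with Proposition \ref{intrc}(a), which says $K(u) \cap K(v) = [0,1]e$. Your main route instead feeds the two parametrizations $w = (1-s)e + su = (1-t)e + tv$, with $s, t \ge 0$, straight into Lemma \ref{unique}, whose conclusion ($1-s = 1-t$ and either $s = t = 0$ or $u = v$) immediately yields $w = e$ when $u \ne v$. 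This is a legitimate and clean specialization: the hypotheses of Lemma \ref{unique} only require the coefficients of $u$ and $v$ to be non-negative, which your $s, t \in [0,1]$ satisfy. What each approach buys: your route is self-contained modulo Lemma \ref{unique} and sidesteps the extra step of ruling out intersections with the axis $[0,1]e$; the paper's route reuses the already-established geometric picture $K(u) \cap K(v) = [0,1]e$, which is the statement it wants to emphasize for the later construction of the cone. Since Proposition \ref{intrc}(a) and Lemma \ref{unique} are themselves proved by nearly identical arguments (reduce to $\beta u = \delta v$ and apply Lemma \ref{propS}(4)), the two proofs rest on the same underlying mechanism; yours just packages it more directly.
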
 
\begin{proof}
	Note that if $\alpha e \in [e, u]$ for some $u \in S_0$ and $\alpha \in [0, 1]$, say, $\alpha e = (1 - \lambda) e + \lambda u$, then $\lambda (e - u) = (1 - \alpha) e$. As $e - u \in S_0$ by \ref{skel}(1) and as $S_0 \bigcap \mathbb{R} e = \emptyset$, we must have $\lambda = 0 = 1 - \alpha$. Thus $[e, u] \bigcap [e, v] = \lbrace e \rbrace$. Now as $[e, u] \subset K(u)$, the result follows from Proposition \ref{intrc}.
\end{proof} 
Let $E$ be a convex subset of a real vector space $X$ with $0 \in E$. An element $x \in E$ is called a \emph{lead point} of $E$, if for any $y \in E$ and $\lambda \in [0, 1]$ with $x = \lambda y$, we have $\lambda = 1$ and $y = x$. The set of all lead points of $E$ is denoted by $Lead(E)$. 

A non-empty set $E$ of a real vector space $V$ is said to be \emph{linearly compact}, if for any $x, y \in E$ with $x \ne y$, we have, the intersection of $E$ with the line through $x$ and $y$, $\lbrace \lambda \in \mathbb{R}: (1 - \lambda) x + \lambda y \in E \rbrace$, is compact (in $\mathbb{R}$). Note that if $E$ is convex, the above intersection is an interval. Following \cite[Proposition 3.2]{GK20}, we may conclude that if $E$ is a linearly compact convex set with $0 \in E$, then $Lead(E)$ is non-empty and for each $x \in E, x \ne 0$, there exist a unique $u \in Lead(E)$ and a unique $0 < \alpha \le 1$ such that $x = \alpha u$.

\begin{theorem}\label{convK}
	Let $X$ be a non-zero real vector space and let $S$ be a skeleton in $X$ with $e \ne 0$ as its head. Consider $K = \bigcup_{u \in S_0} K(u)$ and $C = \bigcup_{u \in S_0} [e, u]$. Then $K$ is convex set containing $0$ and $e$ such that $Lead(K) = C$. Moreover, $e$ is an extreme point of $K$.
\end{theorem}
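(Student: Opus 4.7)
The plan is to verify in turn the four assertions: $0, e \in K$ (immediate, since $\{0,e\} \subseteq K(u)$ for every $u \in S_0$), convexity of $K$, the equality $\operatorname{Lead}(K) = C$, and extremality of $e$. All four arguments will draw on Definition \ref{skel}, Lemmas \ref{propS} and \ref{unique}, together with the observation (already used implicitly in the proof of Corollary \ref{intrr}) that $S_0 \cap \mathbb{R}e = \emptyset$; this last fact follows by applying Definition \ref{skel}(3) with $u_1 = u_2 = c e \in S_0$ to derive a contradiction.

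For convexity, I would take $x = \alpha_1 e + \beta_1 u \in K(u)$ and $y = \alpha_2 e + \beta_2 v \in K(v)$ with $\alpha_i + \beta_i \le 1$, and analyze $z = tx + (1-t)y$ for $t \in [0,1]$. Setting $m = t\beta_1 + (1-t)\beta_2$, the case $m = 0$ puts $z$ on $[0,e] \subseteq K(u)$; otherwise $\tfrac{t\beta_1}{m} u + \tfrac{(1-t)\beta_2}{m} v$ is a convex combination of elements of $S_0$, and Lemma \ref{propS}(2) rewrites it as $\gamma e + \delta w$ with $w \in S_0$ and $\gamma + \delta \le 1$. Substituting back yields $z = (t\alpha_1 + (1-t)\alpha_2 + m\gamma)e + m\delta w$, and a direct check using $\alpha_i + \beta_i \le 1$ shows that the coefficient sum is at most $1$, so $z \in K(w) \subseteq K$.

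For $C \subseteq \operatorname{Lead}(K)$, I would fix $x = (1-\lambda)e + \lambda u \in [e,u]$ and suppose $x = \mu y$ with $y = \alpha e + \beta v \in K$, $\mu \in [0,1]$; since $x \neq 0$, $\mu > 0$. Applying Lemma \ref{unique} to $(1-\lambda)e + \lambda u = \mu\alpha e + \mu\beta v$ yields $1 - \lambda = \mu\alpha$ and either $\lambda = \mu\beta = 0$ or $u = v$. In the first case $x = e$ and $\beta = 0$, so $\mu\alpha = 1$ forces $\mu = \alpha = 1$ and $y = e$. In the second, $\lambda u = \mu\beta u$ in a vector space gives $\lambda = \mu\beta$, whence $\mu(\alpha+\beta) = 1$ combined with $\mu, \alpha+\beta \le 1$ forces $\mu = 1$ and $y = x$. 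For the reverse inclusion, if $x = \alpha e + \beta u \in K(u)$ satisfies $\alpha + \beta < 1$, then either $x = 0$ (not a lead point, witnessed by $0 = 0 \cdot e$) or $x = (\alpha+\beta)\bigl(\tfrac{\alpha}{\alpha+\beta}e + \tfrac{\beta}{\alpha+\beta}u\bigr)$ is a proper positive scalar multiple of an element of $[e,u] \subseteq K$, so $x \notin \operatorname{Lead}(K)$.

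Finally, for extremality, suppose $e = tx + (1-t)y$ with $t \in (0,1)$ and $x, y$ as above. Rearranging gives $A e = t\beta_1 u + (1-t)\beta_2 v$ with $A = t(1-\alpha_1) + (1-t)(1-\alpha_2) \ge t\beta_1 + (1-t)\beta_2 \ge 0$. If exactly one $\beta_i$ is positive, $A e$ is a positive multiple of an element of $S_0$, contradicting $S_0 \cap \mathbb{R}e = \emptyset$; the subcase $u = v$ (with both $\beta_i > 0$) is excluded for the same reason. If both $\beta_i > 0$ and $u \neq v$, then $A > 0$ by Lemma \ref{propS}(\ref{pos}), and Definition \ref{skel}(3) gives $t\beta_1/A, (1-t)\beta_2/A \ge 1$, whence $A \ge t\beta_1 + (1-t)\beta_2 \ge 2A$, impossible. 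Hence $\beta_1 = \beta_2 = 0$, and then $A = 0$ forces $\alpha_1 = \alpha_2 = 1$, giving $x = y = e$. I expect this last step to be the main technical obstacle: ruling out the mixed decompositions of $e$ requires carefully balancing axiom (3) against both the sum bound $\alpha_i + \beta_i \le 1$ and the non-collinearity $S_0 \cap \mathbb{R}e = \emptyset$.
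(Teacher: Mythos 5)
Your proof is correct and follows essentially the same route as the paper: convexity via Lemma \ref{propS}(2), the inclusion $C \subseteq Lead(K)$ via axiom (3) of Definition \ref{skel} (you route it through Lemma \ref{unique}, which is itself a consequence of that axiom), and extremality of $e$ via Lemma \ref{propS} and the fact that $S_0 \cap \mathbb{R}e = \emptyset$. The only differences are cosmetic: you treat the degenerate case $m = 0$ in the convexity argument explicitly (the paper divides by $\lambda\beta + (1-\lambda)\delta$ without comment), and you run the extremality case analysis directly on the coefficients of the $K(u)$-representations rather than first reducing $u, v$ to the canopy $C$ as the paper does.
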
 
\begin{proof}
	Let $x, y \in K$. Then there are $u, v \in S_0$ and $\alpha, \beta, \gamma, \delta \in [0, 1]$ with $\alpha + \beta \le 1$ and $\gamma + \delta \le 1$ such that $x = \alpha e + \beta u$ and $y = \gamma e + \delta v$. Then for $\lambda \in [0, 1]$, we have 
	\begin{eqnarray*}
		\lambda x + (1 - \lambda) y &=& (\lambda \alpha + (1 - \lambda) \gamma) e + \lambda \beta u + (1 - \lambda) \delta v \\
		&=& (\lambda \alpha + (1 - \lambda) \gamma) e + k (\lambda_1 u + (1 - \lambda_1) v)
	\end{eqnarray*} 
	where $k := \lambda \beta + (1 - \lambda) \delta$ and $\lambda_1 := \frac{\lambda \beta}{\lambda \beta + (1 - \lambda) \delta} \in [0, 1]$. By Lemma \ref{propS}(\ref{rim2}), we can find $w \in S_0$ and $\eta, \kappa \in [0, \frac 12]$ with $\eta \le \min \lbrace \lambda_1, 1 - \lambda_1 \rbrace$ and $\eta + \kappa \le 1$ such that $\lambda_1 u + (1 - \lambda_1) v = \eta e + \kappa w$. Thus 
	$$\lambda x + (1 - \lambda) y = (\lambda \alpha + (1 - \lambda) \gamma) e + k (\eta e + \kappa w) = \alpha_1 e + \beta_1 w$$ 
	where 
	$$\alpha_1 = \lambda \alpha + (1 - \lambda) \gamma + k \eta = \lambda \alpha + (1 - \lambda) \gamma + (\lambda \beta + (1 - \lambda) \delta) \eta$$ 
	and 
	$$\beta_1 = k \kappa = (\lambda \beta + (1 - \lambda) \delta) \kappa.$$ 
	Now $\alpha_1 \ge 0$, $\beta_1 \ge 0$ and 
	\begin{eqnarray*}
		\alpha_1 + \beta_1 &=& \lambda \alpha + (1 - \lambda) \gamma + (\lambda \beta + (1 - \lambda) \delta) (\eta + \kappa) \\
		&\le& \lambda \alpha + (1 - \lambda) \gamma + \lambda \beta + (1 - \lambda) \delta \\
		&=& \lambda (\alpha + \beta) + (1 - \lambda) (\gamma + \delta) \\ 
		&\le& 1
	\end{eqnarray*} 
	for $\alpha + \beta \le 1$, $\gamma + \delta \le 1$ and $\eta + \kappa \le 1$. Thus $\lambda x + (1 - \lambda) y \in K(w) \subset K$. Hence $K$ is a convex set containing $0$ and $e$. 
	
	Next, we show that $Lead(K) = C$. Let $v \in K$ $v \ne 0$. If $v = \alpha e$ then $0 < \alpha \le 1$ and $e \in C$. Now assume that $v \notin [0, 1] e$. Then there exists a unique $u \in S_0$ such that $v \in K(u)$. In other words, $v = \alpha e + \beta u$ for some $\alpha, \beta \in [0, 1]$ with $\alpha + \beta \le 1$. Since $v \ne 0$, we have $\beta > 0$. Set $w = (\alpha + \beta)^{-1} v$. Then $w \in C$ and $v = (\alpha + \beta) w$. Thus $K$ has a representation in $C$. We show that $C = Lead(K)$. 
	
	Let $u \in C$ and assume that $u = \alpha w$ for some $w \in K$ and $\alpha \in [0, 1]$. Let $u \in [e, v]$ for some $v \in S_0$, say $u = \lambda v + (1 - \lambda) e = e - \lambda (e - v)$. As $w \in K$, we have $w = \gamma e + \delta x$ for some $x \in S_0$ and $\gamma, \delta \in [0, 1]$ with $\gamma + \delta \le 1$. Then $e - \lambda (e - v) = \alpha (\gamma e + \delta x)$ or equivalently, $(1 - \alpha \gamma) e = \lambda (e - v) + \alpha \delta x$. Since $\alpha, \gamma \in [0, 1]$, we have $1 - \alpha \gamma \ge 1$. If $\alpha \gamma = 1$, then $\alpha = 1$ and we have $u = w$. So we assume that $1 - \alpha \gamma > 0$. Thus by \ref{skel}(3), we get $1 - \alpha \gamma \le \alpha \delta$. Therefore, $1 \le \alpha (\gamma + \delta) \le \alpha \le 1$ for $\gamma + \delta \le 1$. So we have $\alpha = 1$ and $u = w$ once again. Hence $C \subset Lead(K)$. 
	
	Conversely, let $u \in Lead(K)$. If $u = \alpha e$ for some $\alpha \in [0, 1]$, then by the definition of Lead, we have $\alpha = 1$ and $u = e \in C$. So we assume that $u \notin [0, 1] e$. Then as above, there exists $x \in C$ and $\lambda \in [0, 1]$ such that $u = \lambda x$. As $x \in Lead(K)$, we must have $\lambda = 1$ and $u = x \in C$. Hence $Lead(K) \subset C$ and consequently, $Lead(K) = C$. 
	
	Finally, we show that $e$ is an extreme point of $K$. Let $e = \alpha u + (1 - \alpha) v$ for some $u, v \in K$ and and for $0 < \alpha < 1$. Find $u_1, v_1 \in C$ and $\lambda_1, \mu_1 \in [0, 1]$ such that $u = \lambda_1 u_1$ and $v = \mu_1 v_1$. Then $e = \alpha \lambda_1 u_1 + (1 - \alpha) \mu_1 v_1$. Thus by Lemma \ref{propS}(\ref{n1}), we have $1 \le \alpha \lambda_1 + (1 - \alpha) \mu_1 \le 1$, that is, $\lambda_1 = 1 = \mu_1$. Therefore, $u, v \in C$. Find $x, y \in S_0$ and $\lambda, \mu \in [0, 1]$ such that $u = e - \lambda x$ and $v = e - \mu y$. Then $e = e - \alpha \lambda x - (1 - \alpha) \mu y$ whence $\alpha \lambda x + (1 - \alpha) \mu y = 0$. Now, by Lemma \ref{propS}(\ref{li}), we must have $\alpha \lambda = 0 = (1 - \alpha) \mu$. Since $0 < \alpha < 1$, we conclude that $\lambda = 0 = \mu$. Thus $u = e = v$ and consequently, we conclude that $e$ is an extreme point of $K$. 
\end{proof} 
We call $C$ the \emph{canopy} of $K$ with $e$ its \emph{summit}. 
\begin{remark}
	It is easy to note that $K = \co(S)$. Thus $K \cap \mathbb{R} e = [0, 1]e$. Also, by Proposition \ref{intrc}, we have $e - K = K$. Thus $0$ is also an extreme point of $K$.
\end{remark} 
\begin{proposition}\label{charK}
	Let $X$ be a non-zero real vector space and let $S$ be a skeleton in $X$ with $e \ne 0$ as its head and let $u \in S_0$. Then $\alpha e + \beta u \in K$ if and only if $\alpha, \alpha + \beta \in \mathbb{R}^+$ and $\max \lbrace \alpha, \alpha + \beta \rbrace \le 1$. 
\end{proposition}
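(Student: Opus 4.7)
The plan is to prove each direction separately, with a case split on the sign of $\beta$ being the key device, made possible by the symmetry $e - u \in S_0$ from condition \ref{skel}(1).

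For the \emph{sufficiency} direction, suppose $\alpha, \alpha + \beta \ge 0$ and $\max\{\alpha, \alpha+\beta\} \le 1$. If $\beta \ge 0$, then $\alpha \ge 0$, $\beta \ge 0$, $\alpha + \beta \le 1$, so by definition $\alpha e + \beta u \in K(u) \subset K$. If $\beta < 0$, I would rewrite
\[
\alpha e + \beta u = (\alpha + \beta) e + (-\beta)(e - u),
\]
and observe that $e - u \in S_0$, $-\beta > 0$, $\alpha + \beta \ge 0$, and $(\alpha+\beta) + (-\beta) = \alpha \le 1$. Hence $\alpha e + \beta u \in K(e-u) \subset K$.

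For the \emph{necessity} direction, assume $\alpha e + \beta u \in K$. By definition of $K$, there exist $v \in S_0$ and $\gamma, \delta \in [0,1]$ with $\gamma + \delta \le 1$ such that
\[
\alpha e + \beta u = \gamma e + \delta v.
\]
If $\beta \ge 0$, then Lemma \ref{unique} applies directly (both coefficients of $u$ and $v$ are non-negative) and yields $\alpha = \gamma$ together with either $\beta = \delta = 0$ or $u = v$. In the first subcase $\alpha = \gamma \in [0,1]$ and $\alpha + \beta = \alpha$, so both lie in $[0,1]$. In the second subcase $\beta u = \delta u$ with $u \ne 0$ forces $\beta = \delta$, and we again conclude $\alpha, \alpha + \beta \in [0,1]$ since $\gamma, \delta, \gamma + \delta \in [0,1]$.

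If $\beta < 0$, I would again use the substitution $\beta u = -(-\beta)u = -\beta (e-u) - (-\beta) e \cdot (-1)$; more cleanly, rewrite the equation as
\[
(\alpha + \beta) e + (-\beta)(e - u) = \gamma e + \delta v,
\]
where now both coefficients $-\beta > 0$ and $\delta \ge 0$ are non-negative. Applying Lemma \ref{unique} to the pair $(e-u, v) \in S_0 \times S_0$ gives $\alpha + \beta = \gamma$, and since $-\beta > 0$ rules out the degenerate case, we must have $v = e - u$ and $-\beta = \delta$. Therefore $\alpha = \gamma + \delta$, and from $\gamma, \delta \ge 0$ with $\gamma + \delta \le 1$ we obtain $\alpha \in [0,1]$ and $\alpha + \beta = \gamma \in [0,1]$, completing the proof.

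The only delicate point is ensuring that Lemma \ref{unique} can be legitimately invoked in the $\beta < 0$ case; this is precisely what the rewriting $\beta u = (\beta)e + (-\beta)(e-u) - \beta e$, i.e. moving the negative coefficient onto $e - u$, achieves. No other obstacles are expected — the rest is routine bookkeeping with the constraints $\gamma, \delta \in [0,1]$ and $\gamma + \delta \le 1$.
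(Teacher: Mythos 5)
Your proof is correct, and in the necessity direction it takes a genuinely more economical route than the paper. The paper first passes through Theorem \ref{convK} to write $\alpha e + \beta u = \lambda x$ with $x \in Lead(K)$, then establishes $\alpha \ge 0$ and $\alpha + \beta \ge 0$ by two separate contradiction arguments built on Lemma \ref{propS}(\ref{n1}) and Lemma \ref{propS}(\ref{pos}), and only afterwards runs a sign-of-$\beta$ case analysis on the representation $\gamma e + \delta w$ to get the upper bounds. You instead start directly from the definition $K = \bigcup_{v \in S_0} K(v)$ to obtain $\alpha e + \beta u = \gamma e + \delta v$ with $\gamma, \delta \ge 0$, $\gamma + \delta \le 1$, and then let Lemma \ref{unique} do all the work at once, using the reflection $u \mapsto e - u$ (legitimate by condition (1) of Definition \ref{skel}) to make both coefficients nonnegative when $\beta < 0$. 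This collapses the paper's four sub-arguments into two short cases and simultaneously delivers the nonnegativity and the bounds; the only price is that you lean on Lemma \ref{unique}, which the paper's proof of this proposition does not invoke (though that lemma is itself derived from the same two parts of Lemma \ref{propS}). The sufficiency direction is identical to the paper's. One cosmetic remark: the intermediate expression $\beta u = -(-\beta)u = -\beta(e-u) - (-\beta)e\cdot(-1)$ in your $\beta<0$ discussion is garbled, but the clean identity $\alpha e + \beta u = (\alpha+\beta)e + (-\beta)(e-u)$ that you actually use is correct, so nothing is lost.
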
 
\begin{proof}
	Let $\alpha e + \beta u \in K$. If $\alpha e + \beta u = \lambda e$ for some $\lambda \in [0, 1]$, then $(\lambda - \alpha) e = \beta u$. As $u \in S_0$, we must have $\lambda = \alpha$ and $\beta = 0$. Thus $0 \le \alpha + \beta = \alpha \le 1$. So we assume that $\alpha e + \beta u \notin \mathbb{R} e$. Then by Theorem \ref{convK}, there exists a unique $x \in C$ and $0 < \lambda \le 1$ such that $\alpha e + \beta u = \lambda x$. Consequently, we can also find $w \in S_0$ and $1 \le \theta < 1$ such that $x = \theta e + (1 - \theta) w$. Thus $\alpha e + \beta u = \lambda \theta e + \lambda (1 - \theta) w$. Now we show that $\alpha \ge 0$.
	
	Assume, if possible, that $\alpha < 0$. Then $\lambda - \alpha > 0$ and we have 
	$$(\lambda - \alpha) e = (\lambda \theta + \lambda(1 - \theta) - \alpha) e = \beta u + \lambda (1 - \theta) (e - w).$$
	
	If $\beta \ge 0$, then by Lemma \ref{propS}(\ref{n1}), we have $\lambda - \alpha = \beta = \lambda (1 - \theta)$. But then $\alpha = \lambda \theta \ge 0$ which is a contradiction. Thus $\beta < 0$ and we have 
	$$(\lambda - \alpha - \beta) e = - \beta (e - u)  + \lambda (1 - \theta) (e - w).$$ 
	Again invoking Lemma \ref{propS}(\ref{n1}), we conclude that $\lambda - \alpha - \beta = - \beta = \lambda (1 - \theta)$. This leads to another contradiction $\alpha = \lambda \ge 0$. Hence $\alpha \ge 0$. 
	
	Next, we aim to prove that $\alpha + \beta \ge 0$ and assume to the contrary that $\alpha + \beta < 0$, that is, $0 \le \alpha < - \beta$. Now $(\alpha - \lambda \theta) = - \beta u + (1 - \lambda) \theta w$ so by Lemma \ref{propS}(\ref{pos}), we must have $\alpha - \lambda \theta > 0$ as $\beta < 0$. By Lemma \ref{propS}(\ref{n1}), we have $\alpha - \lambda \theta = - \beta = (1 - \lambda) \theta$. Thus $\alpha + \beta = \lambda \theta \ge 0$ contradicting the assumption $\alpha + \beta < 0$. Thus $\alpha + \beta \ge 0$. 
	
	Since $\alpha e + \beta u \in K \setminus \mathbb{R} e$, there exists a unique $w \in S_0$ such that $\alpha e + \beta u = \gamma e + \delta w$ where $\gamma, \delta \in \mathbb{R}^+$ with $\gamma + \delta \le 1$. 
	
	Let $\beta \le 0$. Then $(\alpha - \gamma) e = - \beta u + \delta w$ so by Lemma \ref{propS}(\ref{pos}), we must have $\alpha - \gamma \ge 0$. If $\alpha = \gamma$, then we further get $- \beta = 0 = \delta$ so that 
	$$0 \le \alpha + \beta = \alpha = \gamma = \gamma + \delta \le 1.$$ 
	If $\alpha > \gamma$, then by Lemma \ref{propS}(\ref{n1}), we get $\alpha - \gamma = - \beta = \delta$. Thus $\alpha = \gamma + \delta \le 1$ and $\alpha + \beta = \gamma \le \gamma + \delta \le 1$. 
	
	Next, let $\beta > 0$. Then $(\alpha + \beta - \gamma) e = \beta (e - u) + \delta w$. Thus by Lemma \ref{propS}(\ref{pos}), we get $\alpha + \beta - \gamma \ge 0$. If $\alpha + \beta = \gamma$, we further get $\beta = 0 = \delta$, contradicting $\beta > 0$. Thus $\alpha + \beta > \gamma$. Invoking Lemma \ref{propS}(\ref{n1}), we have $\alpha + \beta - \gamma = \beta = \delta$. Hence $0 \le \alpha \le \alpha + \beta = \gamma + \delta \le 1$. 
	
	Conversely, we assume that $0 \le \alpha, \alpha + \beta \le 1$. When $\beta \ge 1$, we have $\alpha e + \beta u \in K(u) \subset K$ for $\alpha, \beta \ge 0$ and $\alpha + \beta \le 1$. When $\beta < 0$, we can write $\alpha e + \beta u = (\alpha + \beta) e - \beta (e - u) \in K(e - u) \subset K$ for $\alpha + \beta, - \beta \ge 0$ and $\alpha = (\alpha + \beta) - \beta = \alpha \le 1$. 
\end{proof}
Now we prove the characterization of order unit spaces. 
\begin{proof}[Proof of Theorem \ref{skelg}]
	Put $C = \bigcup_{u \in S_0} [e, u]$ and $K = \co(S)$. Then by Theorem \ref{convK}, $K$ is a convex set containing $0$ with $C = Lead(K)$. Also, by Proposition \ref{intrc}, $K = \bigcup_{\alpha \in [0, 1]} \alpha C$. Thus $V^+ =  \bigcup_{\lambda \in \mathbb{R}^+} \lambda C = \bigcup_{n \in \mathbb{N}} n K$ and $V = V^+ - V^+$. We prove that 
	$$K = \lbrace v \in V^+: 0 \le v \le e \rbrace. \qquad (*)$$ 
	Let $v \in K$. Then $v \in V^+$ so that $0 \le v$. Let $v = \alpha u$ for some $u \in C$ and $\alpha \in [0, 1]$. If $u = e$, then $v = \alpha e \le e$. So let $u \ne e$. Then there exists $w \in S_0$ and $\lambda \in [0, 1]$ such that $u = (1 - \lambda) e + \lambda w$. Thus 
	$$e - v = (1 - \alpha) e + \alpha \lambda (e - w) \in K(w) \subset V^+$$ 
	for $e - w \in S_0$ and $1 - \alpha, \alpha \lambda \ge 0$ with $1 - \alpha + \alpha \lambda \le 1$. Therefore, $K \subset \lbrace v \in V^+: 0 \le v \le e \rbrace$. 
	
	Conversely, assume that $0 \le u \le e$. Then $u, e - u \in V^+$. Thus there exist $v, w \in C$ and $\alpha, \beta \ge 0$ such that $u = \alpha v$ and $e - u = \beta w$. Then $e = \alpha v + \beta w$ so by the definition of a canopy, we must have $\alpha, \beta \le 1$. Therefore, $u = \alpha v \in K$. Hence $(*)$ is proved. 
	
	Since $V = V^+ - V^+$, it follows from $(*)$ that $e$ is an order unit for $V$. We prove that $V^+$ is proper. Let $\pm u \in V^+$. Then there exist $v, w \in C$ and $\alpha, \beta \ge 0$ such that $u = \alpha v$ and $- u = \beta w$. Thus $\alpha v + \beta w = 0$. We show that $\alpha = 0 = \beta$. Assume, if possible, that $\alpha > 0$. Then $\beta > 0$ too, for $v \ne 0$. Put $k = \frac{\alpha}{\alpha + \beta}$. Then $0 < k < 1$ and we have $ku + (1 - k) v = 0$ so that $e = k (e - u) + (1 - k) (e - v)$. As $u, v \in C$, we have $e - u, e - v \in K$. As $e$ is an extreme point of $K$ by Theorem \ref{convK}, we deduce that $e - u = e = e - v$, that is, $u = 0 = v$ which is absurd. Thus $\alpha = 0$. Therefore, $V^+$ is proper.
	
	Next, we show that $V^+$ is Archimedean. Let $v \in V$ be such that $k e + v \in V^+$ for all $k > 0$. Then $v_n := \left(\frac{1}{1 + \Vert v \Vert}\right) \left( \frac{1}{n+1} e + \frac{n}{n+1} v\right) \in V^+$ for all $n \in \mathbb{N}$. Since 
	$$\Vert v_n \Vert \le \left( \frac{1}{1 + \Vert v \Vert} \right) \left( \frac{1 + n \Vert v \Vert}{1 + n} \right) < 1,$$ 
	we have $v_n \in K$ for every $n$. Thus by Proposition \ref{charK}, for each $n$, there exists a unique $u_n \in S_0$ and $\alpha_n, \beta_n \in \mathbb{R}^+$ with $\alpha_n + \beta \le 1$ such that $v_n = \alpha_n e + \beta_n u_n$. Set $u_1 := u, \alpha = 2 \alpha_1 (1 + \Vert v \Vert)$ and $ \beta = 2 \beta_1 (1 + \Vert v \Vert)$. Then $v_1 = \frac{e + v}{2 (1 + \Vert v \Vert)}$. Thus $e + v = \alpha e + \beta u$, or equivalently, $v = (\alpha - 1) e + \beta u$ where $\alpha, \beta \in \mathbb{R}^+$ with $\alpha + \beta \le 2 (1 + \Vert v \Vert)$. It follows that 
	\begin{eqnarray*}
		\alpha_n e + \beta_n u_n = v_n &=& \frac{1}{(1 + \Vert v \Vert)} \left(\frac{e + n v}{n + 1} \right) \\ 
		&=& \frac{(n \alpha - n + 1) e + n \beta u}{(n + 1) (1 + \Vert v \Vert)},
	\end{eqnarray*} 
	that is, $\alpha_n e + \beta_n u_n = \gamma_n e + \delta_n u$ for all $n$ where $\gamma_n = \frac{(n \alpha - n + 1)}{(n + 1) (1 + \Vert v \Vert)}$ and $\delta_n = \frac{ + n \beta}{(n + 1) (1 + \Vert v \Vert)}$. Let $n \in \mathbb{N}$. Since $\beta_n \ge 0$ and $\delta_n \ge 0$, by Lemma \ref{unique} we have $\frac{(n \alpha - n + 1)}{(n + 1) (1 + \Vert v \Vert)} = \gamma_n = \alpha_n \in [0, 1]$. Taking limit as $n \to \infty$, we may conclude that $0 \le \alpha - 1 \le 1 + \Vert v \Vert$. Thus $v = (\alpha - 1) e + \beta u \in V^+$. 	 Therefore $V^+$ is Archimedean. 
	
	Now it follows that $(V, V^+, e)$ is an order unit space. We also note that $K = \lbrace v \in V^+: \Vert v \Vert \le 1 \rbrace$. Thus 
	$$C = Lead(K) = \lbrace v \in V^+: \Vert v \Vert = 1 \rbrace.$$ 
	We show that $S_0 = C \cap (e - C)$. Since $S_0 \subset C$ and since $e - S_0 = S_0$ by condition \ref{skel}(1), we have $S_0 \subset C \bigcap (e - C)$. Now let $w \in C \bigcap (e - C)$. Then $w, e - w \in C$. Thus there exist $u, v \in S_0$ and $\alpha, \beta \in [0, 1]$ such that 
	$$w = \alpha u + (1 - \alpha) e = e - \alpha (e - u)$$ 
	and 
	$$e - w = \beta v + (1 - \beta) e = e - \beta (e - v).$$ 
	Thus $e = \alpha (e - u) + \beta (e - v)$. Since $e - u, e - v \in R$, by condition \ref{skel}(3), we get $\alpha \ge 1$ and $\beta \ge 1$. Therefore, $\alpha = 1 = \beta$ so that $w = u \in S_0$. Hence $S_0 = \lbrace v \in V^+: \Vert v \Vert = 1 = \Vert e - v \Vert \rbrace$.
\end{proof} 

\section{The positive part of the closed unit ball} 
 We now discuss some properties of the positive elements with norm one in an order unit space.
\begin{proposition}\label{1}
	Let $(V, e)$ be an order unit space of dimension $\ge 2$. Consider $C_V = \lbrace u \in V^+: \Vert u \Vert = 1 \rbrace$.
	\begin{enumerate}
		\item Fix $u \in C_V$ with $u \ne e$ and consider the one dimensional affine subspace 
		$$L(u) = \lbrace u_{\lambda} := e - \lambda (e - u): \lambda \in \mathbb{R} \rbrace$$
		of $V$. Then  
		\begin{enumerate}
			\item $\lbrace u_{\lambda}: \lambda \in \mathbb{R} \rbrace$ is decreasing; 
			\item $u_{\lambda} \in V^+$ if and only if $\lambda \Vert e - u \Vert \le 1$;
			\item $\Vert u_{\lambda} \Vert = \max \lbrace 1, \vert \lambda \Vert e - u \Vert - 1 \vert \rbrace$ for every $\lambda \in \mathbb{R}$; 
			\item there exists a unique $\bar{u} \in L(u)$ such that $\bar{u}, e - \bar{u} \in C_V$.
		\end{enumerate}
		\item For $u, v \in C_V$, we have either $L(u) \bigcap L(v) = \lbrace e \rbrace$ or $L(u) = L(v)$.
	\end{enumerate} 
	
\end{proposition}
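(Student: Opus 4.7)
The plan is to handle the four sub-claims of (1) in order and then deduce (2) from linear algebra. For (1)(a), observe that $u_{\lambda_1} - u_{\lambda_2} = (\lambda_2 - \lambda_1)(e - u)$, and since $u \in C_V$ satisfies $0 \le u \le e$, we have $e - u \in V^+$, so $\lambda \mapsto u_\lambda$ is decreasing. For (1)(b), $u_\lambda \in V^+$ rewrites as $\lambda(e - u) \le e$; this is automatic for $\lambda \le 0$, and for $\lambda > 0$ it is equivalent to $\lambda\|e - u\| \le 1$ via the standard characterization $v \le c e \iff c \ge \|v\|$ for $v \in V^+$ in an order unit space.

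The main work lies in (1)(c). I would substitute $\mu := \lambda\|e - u\|$ and $w := (e - u)/\|e - u\|$, so that $w \in V^+$ with $\|w\| = 1$ and $u_\lambda = e - \mu w$. Using $\|v\| = \inf\{\alpha \ge 0 : -\alpha e \le v \le \alpha e\}$, the two one-sided constraints read $\mu w \ge (1 - \alpha)e$ (upper) and $\mu w \le (1 + \alpha)e$ (lower). The lower constraint is governed by $\|\mu w\| = |\mu|$ and yields $\alpha \ge \mu - 1$ when $\mu \ge 0$ (and nothing when $\mu \le 0$); the upper constraint is automatic when $\mu \ge 0$ and $\alpha \ge 1$, and forces $\alpha \ge 1 - \mu$ when $\mu \le 0$. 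The delicate point, and the main obstacle, is ruling out the upper bound for $\alpha < 1$ when $\mu > 0$: this reduces to showing $w \not\ge \varepsilon e$ for any $\varepsilon > 0$, which follows from $\|u\| = 1$ by rewriting $w \ge \varepsilon e$ as $u \le (1 - \varepsilon\|e - u\|)e$ and invoking $\|u\| = 1$ to force $\varepsilon \le 0$. A case analysis on the sign of $\mu$ and comparison of $|\mu - 1|$ with $1$ then collapses to $\|u_\lambda\| = \max\{1, |\mu - 1|\}$.

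For (1)(d), write $\bar u = u_{\bar\lambda}$: then $e - \bar u = \bar\lambda(e - u)$, so $\|e - \bar u\| = 1$ forces $|\bar\lambda|\|e - u\| = 1$, while $e - \bar u \in V^+$ together with properness of $V^+$ and $e - u \ne 0$ forces $\bar\lambda \ge 0$, pinning $\bar\lambda = 1/\|e - u\|$; the formula from (c) at $\mu = 1$ then confirms $\|\bar u\| = 1$. Finally, (2) is elementary: $L(u) = e + \mathbb{R}(e - u)$ and $L(v) = e + \mathbb{R}(e - v)$ both pass through $e$, so if $\{e - u, e - v\}$ is linearly dependent the two lines coincide, and otherwise solving $\lambda(e - u) = \mu(e - v)$ forces $\lambda = \mu = 0$, whence the intersection reduces to $\{e\}$; the degenerate cases $u = e$ or $v = e$ fall under the same dichotomy.
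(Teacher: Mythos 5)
Your proposal is correct, and all five sub-claims are established. The overall skeleton matches the paper's proof (reduce to $\|v\|=\inf\lbrace \alpha\ge 0:\ -\alpha e\le v\le \alpha e\rbrace$, split into cases by the sign of $\lambda$, and read off the two one-sided constraints), but the execution of the key steps is genuinely different: the paper picks a state $f_u\in S(V)$ attaining $\Vert e-u\Vert=f_u(e-u)$, i.e.\ minimizing $f(u)$ over $S(V)$, and uses $f_u$ both to characterize positivity of $u_\lambda$ in (1)(b) and to compute the lower bound $l\ge \lambda\Vert e-u\Vert-1$ in (1)(c); you instead work entirely order-theoretically, normalizing to $w=(e-u)/\Vert e-u\Vert$ and invoking only the characterization $v\le ce \iff c\ge\Vert v\Vert$ for $v\in V^+$ (whose "only if" direction needs the Archimedean property for attainment of the infimum -- worth flagging explicitly, since that is exactly what makes your "forces $\alpha\ge\cdots$" statements equivalences). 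Your treatment of the delicate point -- showing $w\not\ge\varepsilon e$ for $\varepsilon>0$ by reducing to $u\le(1-\varepsilon\Vert e-u\Vert)e$ and using $\Vert u\Vert=1$ -- is the state-free counterpart of the paper's use of $\Vert u\Vert = 1$ in Case 1. Likewise in (1)(d) you rule out $\bar\lambda<0$ via properness of $V^+$ applied to $e-\bar u=\bar\lambda(e-u)$, where the paper instead computes $\Vert u_{-\Vert e-u\Vert^{-1}}\Vert=2$ from the formula in (c); both are one line. The state-based route has the advantage of being reusable later in the paper (Lemma 4.3 and the proof of Theorem 1.3 need the functionals explicitly), while your version is more self-contained and makes clearer that only the order-unit-norm identity and the Archimedean property are being used.
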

\begin{proof}
	(1)(a) follows from the construction of $u_{\lambda}$.
	
	(1)(b): Since $e - u \in V^+ \setminus \lbrace 0 \rbrace$, there exist $f_u \in S(V)$ such that $\Vert e - u \Vert = f_u(e - u) = 1 - f_u(u)$. Also then, $f_u(u) \le f(u)$ for all $f \in S(V)$ with $f_u(u) < 1$. Set  $\bar{\lambda} := \Vert e - u \Vert^{-1} = \left(\frac{1}{1 - f_u(u)}\right)$ and $\bar{u} := u_{\bar{\lambda}} = e - \bar{\lambda} (e - u)$. For $f \in S(V)$, we have  
	$$f(\bar{u}) = f(e) - \left( \frac{f(e) - f(u)}{1 - f_u(u)} \right) = \frac{f(u) - f_u(u)}{1 - f_u(u)} \ge 0$$
	so that $\bar{u} \in V^+$. Now by (1), $u_{\lambda} \in V^+$ if $\lambda \le \bar{\lambda}$. Also, if $u_{\lambda} \in V^+$ for some $\lambda \in \mathbb{R}$, then 
	$$0 \le f_u(u_{\lambda}) = f_u(e) - \lambda (f_u(e) - f_u(u)) = 1 - \lambda (1 - f_u(u)).$$ 
	Thus $\lambda \Vert e - u \Vert \le 1$.
	
	(1)(c): Fix $\lambda \in \mathbb{R}$.
	
	Case 1. $\lambda \ge 0$. Then for $k \in \mathbb{R}$, we have $u_{\lambda} \le k e$, that is, $\lambda u \le (k - 1 + \lambda) e$ if and only if $k \ge 1$. Next, for $l \in \mathbb{R}$, we have $l e + u_{\lambda} \in V^+$, that is, $(l + 1 - \lambda) e + \lambda u \in V^+$ if and only if $l + 1 - \lambda + \lambda f_u(u) \ge 0$ as $f_u(u) \le f(u)$ for all $f \in S(V)$. In other words, $l e + u_{\lambda} \in V^+$ if and only if $l \ge \lambda \Vert e - u \Vert - 1$. Thus for $\lambda \ge 0$, we have 
	$$\Vert u_{\lambda} \Vert = \inf \lbrace \alpha > 0: \alpha e \pm u_{\lambda} \in V^+ \rbrace = \max \lbrace 1, \lambda \Vert e - u \Vert - 1 \rbrace.$$
	
	Case 2. $\lambda < 0$. Then $l e + u_{\lambda} \in V^+$ for all $l \ge 0$. Next, for $k \in \mathbb{R}$, we have $u_{\lambda} \le k e$, that is, $(k - 1 + \lambda) e - \lambda u \ge 0$ if and only $k - 1 + \lambda - \lambda f_u(u) \ge 0$ for $f_u(u) \le f(u)$ for all $f \in S(V)$ and $- \lambda > 0$. Thus $u_{\lambda} \le k e$ if and only if $k \ge - \lambda \Vert e - u \Vert + 1$. Therefore, for $\lambda < 0$, we have 
	$$\Vert u_{\lambda} \Vert = \inf \lbrace \alpha > 0: \alpha e \pm u_{\lambda} \in V^+ \rbrace = 1 - \lambda \Vert e - u \Vert.$$
	
	Summing up, for any $\lambda \in \mathbb{R}$, we have  
	$$\Vert u_{\lambda} \Vert = \max \lbrace 1, \vert \lambda \Vert e - u \Vert - 1 \vert \rbrace.$$
	
	(1)(d): Put $\bar{u} = e - \Vert e - u \Vert^{-1} (e - u)$. Then by (c), $\Vert \bar{u} \Vert = 1$. Also by construction, $\Vert e - \bar{u} \Vert = 1$. Next, assume that $u_{\lambda} \in L(u)$ is such that $u_{\lambda}, e - u_{\lambda} \in C_V$. Then, as $\Vert e - u_{\lambda} \Vert = 1$, we get $\vert \lambda \vert \Vert e - u \Vert = 1$. If $\lambda = - \Vert e - u \Vert^{-1}$, then $\Vert u_{\lambda} \Vert = 2$ so we must have $\lambda = \Vert e - u \Vert^{-1}$. Thus $u_{\lambda} = \bar{u}$.
	
	(2): Let $w \in L(u) \bigcap L(v)$ with $w \ne e$. Then there are $\lambda, \mu \in \mathbb{R} \setminus \lbrace 0 \rbrace$ such that $w = e - \lambda (e - u) = e - \mu (e - v)$. Thus $\lambda (e - u) = \mu (e - v)$. Let $\alpha \in \mathbb{R}$. Then 
	$$e - \alpha (e - u) = e - \alpha \lambda \mu^{-1} (e - v)$$
	so that $L(u) \subset L(v)$. Now by symmetry, we have $L(u) = L(v)$.
\end{proof}
The following statements can be verified easily.
\begin{corollary}\label{2}
	Under the assumptions of Lemma \ref{1}, we have 
	\begin{enumerate}
		\item $\Vert u_{\lambda} \Vert = 1$ if and only if $0 \le \lambda \Vert e - v \Vert \le 2$;
		\item $\lbrace \Vert u_{\lambda} \Vert: \lambda \in (- \infty, 0] \rbrace$ is strictly decreasing; 
		\item $\lbrace \Vert u_{\lambda} \Vert: \lambda \in \left[ \frac{2}{\Vert e - u \Vert}, \infty \right) \rbrace$ is strictly increasing
		\item $C_V \bigcap (e - C_V) = \lbrace \bar{u}: u \in C_V \rbrace$; 
		\item $C(u) := L(u) \bigcap C_V = [e, \bar{u}]$;
		\item $L(u) \bigcap (e - C_V) = \lbrace \bar{u} \rbrace$.
	\end{enumerate}
\end{corollary}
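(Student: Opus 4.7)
The plan is to reduce all six assertions to the two explicit descriptions already established in Proposition~\ref{1}(1): the norm formula $\Vert u_\lambda\Vert=\max\lbrace 1,\vert\lambda\Vert e-u\Vert-1\vert\rbrace$ from part (c), together with the positivity criterion $u_\lambda\in V^+$ iff $\lambda\Vert e-u\Vert\le 1$ from part (b). Every item then reduces to an elementary manipulation of a single real parameter, so no deep machinery is needed.

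For (1), I would solve $\vert\lambda\Vert e-u\Vert-1\vert\le 1$, which rearranges to $0\le\lambda\Vert e-u\Vert\le 2$ (the ``$v$'' in the stated inequality reads as a typographical slip for ``$u$''). For (2), the same formula yields $\Vert u_\lambda\Vert=1-\lambda\Vert e-u\Vert$ on $(-\infty,0]$, a strictly decreasing affine function of $\lambda$; for (3), it yields $\Vert u_\lambda\Vert=\lambda\Vert e-u\Vert-1$ on $[2/\Vert e-u\Vert,\infty)$, strictly increasing.

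For (4), the right-to-left inclusion is essentially the content of Proposition~\ref{1}(1)(d). For the reverse, given $w\in C_V\cap(e-C_V)$ (which forces $w\ne e$, since $e\notin e-C_V$), I would apply the bar construction with $u=w$: the hypothesis $\Vert e-w\Vert=1$ collapses $\bar w=e-\Vert e-w\Vert^{-1}(e-w)$ to $\bar w=w$, exhibiting $w$ as $\bar u$ with $u=w$.

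Finally, for (5) and (6), I would intersect $L(u)$ with the indicated sets. Combining (1)(b) with assertion (1) above pins $u_\lambda\in C_V$ to the range $0\le\lambda\le 1/\Vert e-u\Vert$; since $u_0=e$, $u_{1/\Vert e-u\Vert}=\bar u$, and $\lambda\mapsto u_\lambda$ is affine, this range sweeps out precisely the segment $[e,\bar u]$, giving (5). For (6), $e-u_\lambda=\lambda(e-u)$, so positivity forces $\lambda\ge 0$ and $\Vert e-u_\lambda\Vert=\lambda\Vert e-u\Vert=1$ then forces $\lambda=1/\Vert e-u\Vert$, yielding the singleton $\lbrace\bar u\rbrace$. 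No step is substantive; the only mild care is the observation in (4) that the bar construction requires $u\ne e$, which is harmless because $e$ does not lie in $C_V\cap(e-C_V)$.
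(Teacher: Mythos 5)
Your proposal is correct: all six items do reduce, exactly as you describe, to the norm formula and positivity criterion of Proposition~\ref{1}(1)(b),(c) together with the construction of $\bar{u}$ in (1)(d), and you rightly flag the typo ($v$ for $u$) in item (1) and the harmless exclusion of $u=e$ in item (4). The paper supplies no proof at all, stating only that the claims ``can be verified easily,'' so your write-up is precisely the routine verification the author intends.
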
 
\begin{lemma}\label{periphery}
	Let $(V, e)$ be an order unit space and let $u \in C_V$. Then the following statements are equivalent:
	\begin{enumerate}
		\item $u \in (S_V)_0$; 
		\item $u \perp_{\infty} (e - u)$; 
		\item $u$ has an $\infty$-orthogonal pair in $C_V$;
		\item there exists $v \in C_V$ such that $u + v \in C_V$; 
		\item there exists a state $f$ of $V$ such that $f(u) = 0$.
	\end{enumerate} 
\end{lemma}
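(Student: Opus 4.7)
The plan is to prove the cyclic chain $(1) \Rightarrow (2) \Rightarrow (3) \Rightarrow (4) \Rightarrow (5) \Rightarrow (1)$. A general observation used throughout is that $u \in C_V$ combined with $u \ge 0$ gives $0 \le u \le e$, hence $0 \le e - u \le e$ and in particular $\|e-u\| \le 1$. In the chain one also has to accommodate the degenerate case $u = e$ (where $e - u = 0$); reading $\perp_{\infty}$ in (2) with a nonzero second argument (as in the introductory quote of \cite{K14}), each of the five conditions fails for $u = e$, and I will therefore assume throughout that $e - u \ne 0$.

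For $(1) \Rightarrow (2)$, I would apply the $\infty$-orthogonality characterization from \cite{K14} recalled in the introduction: with $u, e-u \in V^+ \setminus \{0\}$ of norm one, $u \perp_{\infty} (e-u)$ reduces to $\|u + (e-u)\| = \|e\| = 1$, which is immediate. For $(2) \Rightarrow (3)$, put $v := \|e-u\|^{-1}(e-u)$, which lies in $V^+$ and has unit norm, so $v \in C_V$; since $\perp_{\infty}$ is invariant under positive scaling of either factor, $u \perp_{\infty} v$ follows directly from (2). For $(3) \Rightarrow (4)$, it suffices to specialise the defining identity of $\perp_{\infty}$ to $k = 1$: this yields $\|u + v\| = \max\{\|u\|, \|v\|\} = 1$, and combined with $u + v \in V^+$ this places $u + v$ in $C_V$.

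The only step requiring a little thought is $(4) \Rightarrow (5)$. Here I would invoke the standard fact that the norm of a positive element of an order unit space is attained by a state, and pick $f$ with $f(v) = \|v\| = 1$. Because $u+v \in C_V$ also has norm one, $f(u+v) \le 1$; together with $f(u) \ge 0$ (by positivity of $u$ and $f$) this sandwiches $0 \le f(u) = f(u+v) - f(v) \le 1 - 1 = 0$, giving $f(u) = 0$.

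Finally $(5) \Rightarrow (1)$ is immediate: $f(u) = 0$ gives $f(e-u) = 1$, hence $\|e-u\| \ge 1$; combined with the general bound $\|e-u\| \le 1$ this yields $\|e-u\| = 1$ and therefore $u \in C_V \cap (e - C_V) = (S_V)_0$ by Theorem~\ref{oskel}. The main subtlety to watch for is the choice of norming state in $(4) \Rightarrow (5)$: one must pick a state norming $v$, not $u$ or $u+v$, since only then does the squeeze deliver $f(u) = 0$ rather than some indeterminate value in $[0,1]$.
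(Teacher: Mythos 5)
Your proof is correct and follows essentially the same route as the paper: the implications $(1)\Rightarrow(2)\Rightarrow(3)\Rightarrow(4)$ are argued identically (via the characterization of $\perp_{\infty}$ for positive elements from \cite{K14} and the $k=1$ specialisation), and your $(5)\Rightarrow(1)$ matches the paper's argument verbatim. The only divergence is organisational: you close the cycle with $(4)\Rightarrow(5)$ by evaluating a state that norms $v$, whereas the paper instead proves $(4)\Rightarrow(1)$ directly from the order inequality $v\le e-u$ (monotonicity of the order unit norm) and treats $(1)\Leftrightarrow(5)$ separately; both are equally valid, and your explicit flagging of the degenerate case $u=e$ is a point the paper silently glosses over.
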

\begin{proof}
	If $u \in (S_V)_0$, then $u, e - u \in V^+$ with $\Vert u \Vert = 1 = \Vert e - u \Vert$. Also then $\Vert u + e - u \Vert = \Vert e \Vert = 1$ so that $u \perp_{\infty} (e - u)$. Thus $(1)$ implies $(2)$. Also, $(2)$ implies $(3)$ trivially. 
	
	Next, let $u \perp_{\infty} v$ for some $v \in C_V$. Then $\Vert u + v \Vert = 1$ so that $u + v \in C_V$. That is, $(3)$ implies $(4)$. 
	
	Now, if $u + v \in C_V$, then $u + v \le e$. Thus $v \le e - u \le e$ and we have $1 = \Vert v \Vert \le \Vert e - u \Vert \le \Vert e \Vert = 1$. Therefore, $u \in (S_V)_0$ so that $(4)$ implies $(1)$. 
	
	Again, if $u \in (S_V)_0$, then $e - u \in C_V$. Thus there exists a state $f$ of $V$ such that $1 = f(e - u) = 1 - f(u)$, or equivalently, $f(u) = 0$. Therefore, $(1)$ implies $(5)$. 
	
	Conversely, if $f(u) = 0$ for some state $f$ on $V$, then $f(e - u) = 1$ so that $\Vert e - u \Vert \ge 1$. Also, as $0 \le u \le e$, we have $0 \le e - u \le e$ so that $\Vert e - u \Vert \le 1$. Thus $e - u \in C_V$ whence $u \in (S_V)_0$. Hence $(5)$ implies $(1)$. 
\end{proof}
\begin{proof}[Proof of Theorem \ref{oskel}]
	
	It is enough to prove that $S_V$ is a skeleton in $V$ with $e$ as its head. By construction, we have $e - u \in S_V$ whenever $u \in S_V$. Also we note that $S_V \subset V^+$. In fact, if $u \in (S_V)_0$, then $u, e - u \le e$. 
	
	Let $u, v \in S_V$ and $\lambda \in (0, 1)$. Without any loss of generality, we may assume that $u, v \notin \lbrace 0, e \rbrace$. Then $x := \lambda u + (1 - \lambda) v \ne 0$. Put $x_1 = \Vert x \Vert^{-1} x$. Then $x_1 \in C_V$. Thus by Corollary \ref{2}(5), there exists a $w \in (S_V)_0$ and $\alpha_1 \in [0, 1]$ such that $x_1 = \alpha_1 e + (1 - \alpha_1) w$. Now it follows that  $\lambda u + (1 - \lambda) v = \alpha e + \beta w$ where $\alpha = \alpha_1 \Vert  \lambda u + (1 - \lambda) v \Vert$ and $\beta = (1 - \alpha_1) \Vert  \lambda u + (1 - \lambda) v \Vert$. Here $\alpha, \beta \in \mathbb{R}^+$ with $\alpha + \beta = \Vert  \lambda u + (1 - \lambda) v \Vert \le 1$. 
	
	Next, let $e = \sum_{i=1}^n \alpha_i$ for some $u_1, \dots, u_n \in (S_V)_0$ and $\alpha_1, \dots, \alpha_n > 0$ with $n \ge 2$. By Lemma \ref{periphery}, There exist $f_1, \dots, f_n \in S(V)$ such that $f_i(u_i) = 0$ for $i = 1, \dots, n$. Thus 
	$$1 = f_j(e) = f_j \left(\sum_{i=1}^n \alpha_i u_i \right) = \sum_{i \ne j} \alpha_i f_i(u_i) \le \sum_{i \ne j} \alpha_i.$$
\end{proof}

\section{The periphery}

Recall that if $(V, e)$ is an order unit space, then (the linearly compact set) $K_V := [0, e] = \lbrace v \in V^+: \Vert v \Vert \le 1 \rbrace$ has a canopy $C_V := \lbrace v \in V^+: \Vert v \Vert = 1 \rbrace$ with its summit at the order unit $e$. The periphery of $C_V$, denoted by $S_V$, is given by  
$$(S_V)_0 := C_V \cap (e - C_V) = \lbrace v \in V^+: \Vert v \Vert = 1 = \Vert e - v \Vert \rbrace.$$ 
In this section, we discuss some of the properties and examples of the periphery corresponding to an order unit space. 
\begin{theorem}
	Let $(V, e)$ be an order unit space and let $Bd(V^+)$ denote the  $\Vert\cdot\Vert$-boundary of $V^+$. Then $R_V = C_V \bigcap Bd(V^+)$.
\end{theorem}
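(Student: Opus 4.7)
The plan is to establish both inclusions using the equivalence recorded in Lemma \ref{periphery}: for $u \in C_V$, one has $u \in (S_V)_0$ if and only if some state of $V$ annihilates $u$. (I read $R_V$ in the statement as notation for the periphery $(S_V)_0$.)

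For the inclusion $(S_V)_0 \subseteq C_V \cap Bd(V^+)$, I would take $u \in (S_V)_0$. Membership $u \in C_V$ is immediate from the definition. To place $u$ on the boundary of $V^+$, I would apply Lemma \ref{periphery} to produce a state $f$ with $f(u) = 0$; then the perturbations $u - \varepsilon e$ satisfy $f(u - \varepsilon e) = -\varepsilon < 0$, so they lie outside $V^+$ while approaching $u$ in the order unit norm. Hence $u$ is not an interior point of $V^+$, and since $u \in V^+$, it sits on the boundary.

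For the reverse inclusion, the key ingredient I would use is the standard description of the norm-interior of the positive cone in an Archimedean order unit space: an element $u$ is interior to $V^+$ if and only if $u$ is itself an order unit, equivalently $u \geq c e$ for some $c > 0$. This follows at once from the identity $\{w : \Vert w - u \Vert \leq c\} = [u - c e,\, u + c e]$. If $u \in C_V \cap Bd(V^+)$, then $u$ is not an order unit, so for each $c > 0$ the element $u - c e$ lies outside $V^+$. A Hahn--Banach separation (equivalently, the fact that $V^+$ is cut out by the non-negative halfspaces of states) then supplies, for each such $c$, a state $f_c$ with $f_c(u) < c$. Passing to a weak-$\ast$ cluster point of $\{f_c\}_{c > 0}$ in the compact state space $S(V)$ yields a state $f$ with $f(u) = 0$, and Lemma \ref{periphery} concludes that $u \in (S_V)_0$.

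The only mildly delicate step is the second inclusion, where one must invoke both the interior-characterisation of $V^+$ and the weak-$\ast$ compactness of $S(V)$ to extract an annihilating state from the family $\{f_c\}_{c > 0}$; once these standard tools are in hand, the argument reduces to a direct application of Lemma \ref{periphery}.
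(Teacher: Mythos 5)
Your proof is correct, and your first inclusion coincides with the paper's argument essentially word for word: produce an annihilating state via Lemma \ref{periphery} and observe that the perturbations $u-\varepsilon e$ lie outside $V^+$. The reverse inclusion reaches the same destination (an annihilating state, fed back into Lemma \ref{periphery}) by a slightly different road. The paper takes an arbitrary sequence $v_n\in V\setminus V^+$ with $v_n\to v$, chooses states $f_n$ with $f_n(v_n)<0$, and extracts a weak$^*$ limit $f_0$ with $f_0(v)\le 0$, hence $f_0(v)=0$ by positivity. You instead first identify the norm-interior of $V^+$ with the set of order units (via $\lbrace w:\Vert w-u\Vert\le c\rbrace=[u-ce,\,u+ce]$), so that a boundary point satisfies $u-ce\notin V^+$ for every $c>0$, then separate these specific points to obtain states $f_c$ with $f_c(u)<c$ and pass to a weak$^*$ cluster point. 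Both arguments rest on the same two facts --- that $V^+$ is the intersection of the half-spaces $\lbrace f\ge 0\rbrace$ for $f\in S(V)$, and that $S(V)$ is weak$^*$-compact --- so the difference lies only in the choice of witnesses; your version buys a reusable description of the interior of $V^+$, while the paper's is marginally shorter because it needs no interior characterization at all. One small point of hygiene: the cluster point argument only yields $f(u)\le 0$ directly, so you should state explicitly (as the paper does) that $f(u)\ge 0$ because $f$ is a state and $u\in V^+$, whence $f(u)=0$.
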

\begin{proof}
	Let $u \in R_V$. Then by Lemma \ref{periphery}, $u \in C_V$ and there exists $f \in S(V)$ such that $f(u) = 0$. Thus for any $\epsilon > 0$, we have $f(u - \epsilon e) = - \epsilon < 0$ so that $u - \epsilon e \notin V^+$. Hence $u \in \overline{(V \setminus V^+)}$ and consequently, $u \in Bd(V^+)$. 
	
	Conversely, let $v \in Bd(V^+)$. Then $v \in V^+$ and there exists a sequence $\langle v_n \rangle$ in $V \setminus V^+$ such that $v_n \to v$. Let $n \in \mathbb{N}$. As $v_n \notin V^+$, we can find $f_n \in S(V)$ such that $f_n(v_n) < 0$. Since $S(V)$ is weak$^*$-compact, by passing to a subsequence, we may deduce that $f_n \to f_0 \in S(V)$ in the weak$^*$-topology. Now, as 
	\begin{eqnarray*}
		\vert f_n(v_n) - f_0(v) \vert &\le& \vert f_n(v_n) - f_0(v_n) \vert + \vert f_0(v_n - v) \vert \\ 
		&\le& \vert f_n(v_n) - f_0(v_n) \vert + \Vert v_n - v \Vert,
	\end{eqnarray*}
	we conclude that $f_0(v) = \lim f_n(v_n) \le 0$. Since $v \in V^+$ and since $f_0 \in S(V)$, we also have $f_0(v) \ge 0$. Thus $f_0(v) = 0$. Therefore, by Lemma \ref{periphery}, we have  $C_V \bigcap Bd(V^+) \subset R_V$ which completes the proof.
\end{proof}
\begin{remark}
	We have $Bd(V^+) = \bigcup_{u \in R_V} [0, \infty) u$. Also, for $u, v \in R_V$ with $u \ne v$, we have $[0, \infty) u \bigcap [0, \infty) v = \lbrace 0 \rbrace$. Thus the  $\Vert\cdot\Vert$-boundary of $V^+$ is the disjoint union of the rays passing through $R_V$.
\end{remark}
\begin{proposition}\label{disjR}
	Let $(V, e)$ be an order unit space and let $u, v \in (S_V)_0$ with $u \ne v$. Then the following statements are equivalent:
	\begin{enumerate}
		\item $(u, v) \bigcap (S_V)_0 \ne \emptyset$; 
		\item there are states $f$ and $g$ of $V$ such that $f(u) = 1 = f(v)$ and $g(u) = 0 = g(v)$;
		\item $[u, v] \subset (S_V)_0$.  
	\end{enumerate} 
\end{proposition}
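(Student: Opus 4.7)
The plan is to cycle through $(3) \Rightarrow (1) \Rightarrow (2) \Rightarrow (3)$, using Lemma \ref{periphery}(5) as the main tool: membership in $(S_V)_0$ is detected by the existence of a state vanishing on the element.

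The implication $(3) \Rightarrow (1)$ is essentially free, since $u \ne v$ forces the open segment $(u,v)$ to be nonempty and contained in $[u,v] \subset (S_V)_0$.

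For $(1) \Rightarrow (2)$, I would pick $w = \lambda u + (1-\lambda) v \in (S_V)_0$ with $\lambda \in (0,1)$. Since $w \in C_V$, we have $\Vert w \Vert = 1$, so by the Hahn--Banach/state extension theorem there is a state $f \in S(V)$ with $f(w) = 1$. Writing this out, $1 = \lambda f(u) + (1-\lambda) f(v)$; since $u, v \in [0,e]$ (they lie in $(S_V)_0 \subset K_V$) we have $f(u), f(v) \in [0,1]$, so the equality forces $f(u) = 1 = f(v)$. Applying Lemma \ref{periphery} to $w$ produces a state $g$ with $g(w) = 0$, and the same convex-combination argument with $g(u), g(v) \ge 0$ forces $g(u) = 0 = g(v)$.

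For $(2) \Rightarrow (3)$, take any $w = \lambda u + (1-\lambda)v \in [u,v]$ with $\lambda \in [0,1]$. From $u, v \in [0,e]$ I get $0 \le w \le e$, so $\Vert w \Vert \le 1$; evaluating the state $f$ gives $f(w) = 1$, so $\Vert w \Vert = 1$ and $w \in C_V$. Evaluating $g$ gives $g(w) = 0$, so Lemma \ref{periphery}(5) yields $w \in (S_V)_0$.

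The only place that requires care is the $(1) \Rightarrow (2)$ step, and even there the obstacle is mild: one must remember that $u, v \in (S_V)_0$ automatically implies $0 \le u, v \le e$ (so that state values lie in $[0,1]$), which is what allows the strict convex combination equalling $1$ (resp.\ $0$) to pin both $f(u)$ and $f(v)$ (resp.\ $g(u)$ and $g(v)$) to the endpoint. No further machinery beyond Lemma \ref{periphery} and elementary convexity is needed.
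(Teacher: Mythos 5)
Your argument is correct and follows essentially the same route as the paper: both proofs obtain $f$ with $f(w)=1$ from norm-attainment on the positive element $w$ and $g$ with $g(w)=0$ from Lemma \ref{periphery}, pin down $f(u)=f(v)=1$ and $g(u)=g(v)=0$ by convexity, and then push $f$ and $g$ back through an arbitrary point of $[u,v]$ to get $(3)$. The only difference is that you make the trivial implication $(3)\Rightarrow(1)$ explicit, which the paper leaves unstated.
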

\begin{proof}
	Let $w \in (u, v) \bigcap (S_V)_0$. Then $w = (1 - \alpha) u + \alpha v \in (S_V)_0$ for some $\alpha \in (0, 1)$. Find $f, g \in S(V)$ such that $f((1 - \alpha) u + \alpha v) = 1$ and $g((1 - \alpha) u + \alpha v) = 0$. Thus 
	$$1 = f((1 - \alpha) u + \alpha v) = (1 - \alpha) f(u) + \alpha f(v) \le (1 - \alpha) + \alpha = 1$$
	so that $f(u) = 1 = f(v)$. Again 
	$$0 = g((1 - \alpha) u + \alpha v) = (1 - \alpha) g(u) + \alpha g(v)$$ 
	so that $(1 - \alpha) g(u) = 0 = \alpha g(v)$. Since $0 < \alpha < 1$, we get $g(u) = 0 = g(v)$.
	
	Let $\lambda \in [0, 1]$ and consider $x = (1 - \lambda) u + \lambda v$. Then Then $x \in V^+$ with $\Vert x \Vert \le 1$. Also as 
	$$f(x) = (1- \lambda) f(u) + \lambda f(v) = 1 - \lambda + \lambda = 1,$$ 
	we note that $x \in C_V$. Further, 
	$$g(x) = (1- \lambda) g(u) + \lambda g(v) = 0$$ 
	so that $x \in (S_V)_0$.
\end{proof} 
\begin{corollary}
	Let $u, v \in (S_V)_0$ with $u \le v$. Then $[u, v] \subset (S_V)_0$.
\end{corollary}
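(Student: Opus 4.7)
The plan is to reduce the corollary to Proposition \ref{disjR} by producing, out of the hypothesis $u \le v$, the two states required there: one state $f$ with $f(u) = f(v) = 1$ and one state $g$ with $g(u) = g(v) = 0$. Once those states are in hand, Proposition \ref{disjR} gives $[u,v] \subset (S_V)_0$ immediately, and the case $u = v$ is trivial since then $[u,v] = \{u\} \subset (S_V)_0$.

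For the first state, I would start from the fact that $u \in C_V$, so $\Vert u \Vert = 1$ and by a Hahn-Banach/state-extension argument there exists $f \in S(V)$ with $f(u) = 1$. Now using $u \le v$ and positivity of $f$, we get $f(v) \ge f(u) = 1$, while on the other hand $\Vert v \Vert = 1$ forces $f(v) \le 1$. Hence $f(u) = f(v) = 1$, as needed.

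For the second state, I would apply Lemma \ref{periphery} to $v$: since $v \in (S_V)_0$, there exists $g \in S(V)$ with $g(v) = 0$. Because $0 \le u \le v$ and $g$ is positive, we have $0 \le g(u) \le g(v) = 0$, so $g(u) = 0$ as well.

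With both $f$ and $g$ produced, Proposition \ref{disjR}((2)$\Rightarrow$(3)) yields $[u,v] \subset (S_V)_0$. I do not foresee any serious obstacle: the only place where care is needed is checking that Proposition \ref{disjR} actually applies (it assumes $u \ne v$), but that case is handled by the trivial observation above. The whole argument is just a matter of arranging the monotonicity $u \le v$ on each side of the defining inequalities for the peripheral set.
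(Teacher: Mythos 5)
Your proof is correct. It rests on the same key result as the paper's proof, namely Proposition \ref{disjR}, but enters it through a different door: you verify condition (2) directly, producing a state $f$ with $f(u)=f(v)=1$ (norm-attaining state at $u$, pushed up to $v$ by positivity and $u\le v$) and a state $g$ with $g(u)=g(v)=0$ (vanishing state at $v$ from Lemma \ref{periphery}, pushed down to $u$), and then invoke $(2)\Rightarrow(3)$. The paper instead verifies condition (1): it forms the midpoint $w=\tfrac12(u+v)$, uses the sandwich $u\le w\le v$ and $e-v\le e-w\le e-u$ to conclude $w,e-w\in C_V$, hence $w\in(u,v)\cap(S_V)_0$, and then invokes $(1)\Rightarrow(3)$. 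The two arguments are of essentially the same depth -- both ultimately exploit positivity of states together with the monotonicity $u\le v$ -- but yours is marginally more direct in that it skips the auxiliary midpoint and goes straight to the states that the proof of Proposition \ref{disjR} would manufacture anyway. You were also right to flag the hypothesis $u\ne v$ in Proposition \ref{disjR} and dispose of the case $u=v$ separately; the paper's proof glosses over this point.
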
 
\begin{proof}
	Put $w = \frac 12 (u + v)$. Then $u \le w \le v$ and $e - v \le e - w \le e - u$. Since $u, v, e - u, e - v \in C_V$, we get that $w, e - w \in C_V$. Thus $ w \in (S_V)_0$. Now by Proposition \ref{disjR}, we may conclude that $[u, v] \subset (S_V)_0$. 
\end{proof}
Now the next result follows immediately.
\begin{corollary}
	Let $(V, e)$ be an order unit space and assume that $u_1, u_2 \in R_V$ with $u_1 \perp_{\infty} u_2$. Then 
	\begin{enumerate}
		\item $[u_1, e - u_2] \bigcup [u_2, e - u_1] \subset R_V$; 
		\item $\left((u_1, u_2) \bigcup (e - u_1, e - u_2)\right) \bigcap R_V = \emptyset$. 
	\end{enumerate}
\end{corollary}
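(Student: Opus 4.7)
The plan is to reduce the hypothesis $u_1 \perp_{\infty} u_2$ to the single norm identity $\Vert u_1 + u_2 \Vert = 1$, and then to combine the corollary immediately preceding this one with Proposition~\ref{disjR}. The identity is immediate from the definition of $\infty$-orthogonality evaluated at $k = 1$: since $\Vert u_1 \Vert = \Vert u_2 \Vert = 1$, one has $\Vert u_1 + u_2 \Vert = \max\{\Vert u_1 \Vert, \Vert u_2 \Vert\} = 1$.

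For part (1), note that $u_1 + u_2 \in V^+$ together with $\Vert u_1 + u_2 \Vert = 1$ in the order unit space $(V, e)$ yields $u_1 + u_2 \le e$, which rearranges to $u_1 \le e - u_2$ and, by symmetry, $u_2 \le e - u_1$. Since $R_V = (S_V)_0 = C_V \cap (e - C_V)$ is manifestly stable under $u \mapsto e - u$, both $e - u_1$ and $e - u_2$ lie in $R_V$. The preceding corollary, applied to the comparable peripheral pairs $u_1 \le e - u_2$ and $u_2 \le e - u_1$, then delivers $[u_1, e - u_2] \cup [u_2, e - u_1] \subset R_V$.

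For part (2), first observe that $u_1 \ne u_2$: if they coincided, $\infty$-orthogonality at $k = 1$ would force $\Vert 2 u_1 \Vert = 1$, contradicting $\Vert u_1 \Vert = 1$; hence also $e - u_1 \ne e - u_2$. Suppose, toward contradiction, that $(u_1, u_2) \cap R_V \ne \emptyset$. Proposition~\ref{disjR} (the implication $(1) \Rightarrow (2)$) then supplies a state $f$ on $V$ with $f(u_1) = f(u_2) = 1$, whence $f(u_1 + u_2) = 2$; this contradicts $\Vert u_1 + u_2 \Vert = 1$. For the second open segment, applying the same implication to the distinct peripheral elements $e - u_1, e - u_2$ yields a state $g$ with $g(e - u_1) = g(e - u_2) = 0$, that is, $g(u_1) = g(u_2) = 1$, producing the same contradiction $g(u_1 + u_2) = 2 > 1$.

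There is no real obstacle: once the $\infty$-orthogonality hypothesis is rewritten as $\Vert u_1 + u_2 \Vert = 1$, the argument is a direct application of Proposition~\ref{disjR} together with the preceding corollary. The only point requiring care in part (2) is to use the \emph{annihilating} state $g$ (rather than the \emph{norming} state $f$) supplied by Proposition~\ref{disjR} when handling the segment $(e - u_1, e - u_2)$.
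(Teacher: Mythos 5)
Your proof is correct and is essentially the derivation the paper intends: the paper offers no written proof, stating only that the corollary ``follows immediately,'' and the natural route is exactly yours --- rewrite $u_1 \perp_{\infty} u_2$ as $\Vert u_1 + u_2 \Vert = 1$, deduce $u_1 \le e - u_2$ and $u_2 \le e - u_1$ to invoke the preceding corollary for part (1), and use the implication $(1) \Rightarrow (2)$ of Proposition~\ref{disjR} to manufacture a state taking the value $2$ on $u_1 + u_2$ for part (2). Your explicit checks that $u_1 \ne u_2$ and that the annihilating state handles the segment $(e - u_1, e - u_2)$ are exactly the details the paper leaves to the reader.
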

\subsection{Direct sum of order unit spaces.} 
Next we turn to describe $C_{\ell_{\infty}^n}$ and $R_{\ell_{\infty}^n}$.
\begin{lemma}
	Let $(V_1, e_1)$ and $(V_2, e_2)$ be any two order unit spaces. Consider $V = V_1 \times V_2$, $V^+ = V_!^+ \times V_2^+$ and $e = (e_1, e_2)$. Then $(V, e)$ is also an order unit space and we have 
	\begin{enumerate}
		\item $C_V = (C_{V_1} \times [0, e_2]_o) \bigcup ([0, e_2]_o \times C_{V_2})$ and 
		\item $R_V = (R_{V_1} \times [0, e_2]_o) \bigcup ([0, e_2]_o \times R_{V_2}) \bigcup (C_{V_1} \times (e_2 - C_{V_2})) \bigcup ((e_1 - C_{V_1}) \times C_{V_2})$.
	\end{enumerate}
\end{lemma}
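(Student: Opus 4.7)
The first task is to verify that $(V, V^+, e)$ is an order unit space and compute its order unit norm explicitly. Properness of $V^+ = V_1^+ \times V_2^+$ and the Archimedean property reduce coordinatewise to the corresponding properties of $V_1^+$ and $V_2^+$, while the equivalence
\[
\alpha e \pm (v_1, v_2) \in V^+ \iff \alpha e_i \pm v_i \in V_i^+ \text{ for } i = 1, 2
\]
shows that $e = (e_1, e_2)$ is an order unit and yields the explicit norm formula
\[
\|(v_1, v_2)\| = \max\{\|v_1\|_1, \|v_2\|_2\},
\]
where $\|\cdot\|_i$ denotes the order unit norm on $V_i$.

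For part (1), I would unwind the definition directly: $(v_1, v_2)$ lies in $C_V$ precisely when $v_i \in V_i^+$ for each $i$ and $\max\{\|v_1\|_1, \|v_2\|_2\} = 1$. Equivalently, both coordinates lie in the order interval $[0, e_i]_o$ and at least one of the two has norm equal to $1$. Splitting on which coordinate achieves the value $1$ produces exactly the claimed union (with the evident typographical correction that the first factor of the second set should be $[0, e_1]_o$ rather than $[0, e_2]_o$).

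For part (2), I invoke the identification $R_V = C_V \cap (e - C_V)$ established in Theorem \ref{oskel}, so that $(v_1, v_2) \in R_V$ iff both $(v_1, v_2)$ and $(e_1 - v_1, e_2 - v_2)$ belong to $C_V$. Applying (1) to each of these memberships, each condition offers two alternatives, yielding four combinations. The case $\|v_1\|_1 = 1 = \|e_1 - v_1\|_1$ forces $v_1 \in R_{V_1}$ and leaves $v_2$ free in $[0, e_2]_o$; symmetrically one obtains $[0, e_1]_o \times R_{V_2}$. The two ``mixed'' cases, in which the value $1$ is attained on different coordinates in the two memberships, give the remaining pieces $C_{V_1} \times (e_2 - C_{V_2})$ and $(e_1 - C_{V_1}) \times C_{V_2}$. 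Taking the union of these four pieces produces the decomposition; the reverse inclusion is immediate because each summand is by construction contained in both $C_V$ and $e - C_V$.

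The only real obstacle is careful bookkeeping of the four cases (overlaps are harmless because we are describing a union). No deeper idea is required beyond the max formula for the direct-sum norm and the identification $R_V = C_V \cap (e - C_V)$.
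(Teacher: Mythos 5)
Your proposal is correct and follows essentially the same route as the paper: establish the coordinatewise max formula for the order unit norm on the product, read off $C_V$ directly, and obtain $R_V$ from the identification $R_V = C_V \cap (e - C_V)$ via the four-case analysis. Your observation that the first factor of the second set in (1) should read $[0, e_1]_o$ is a correct catch of a typographical slip in the statement.
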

\begin{proof}
	For $(v_1, v_2) \in V$, we have $\Vert (v_1, v_2) \Vert = \max \lbrace \Vert v_1 \Vert, \Vert v_2 \Vert \rbrace$. Thus $(u_1, u_2) \in C_V$ if and only if $u_1 \in V_1^+$, $u_2 \in V_2^+$ and $\max \lbrace \Vert u_1 \Vert, \Vert u_2 \Vert \rbrace = 1$. Therefore, $C_V = (C_{V_1} \times [0, e_2]_o) \bigcup ([0, e_2]_o \times C_{V_2})$. Now, as $(u_1, u_2) \in R_V$ if and only $(u_1, u_2), (e_1 - u_1, e_2 - u_2) \in C_V$, we may deduce that $R_V = (R_{V_1} \times [0, e_2]_o) \bigcup ([0, e_2]_o \times R_{V_2}) \bigcup (C_{V_1} \times (e_2 - C_{V_2})) \bigcup ((e_1 - C_{V_1}) \times C_{V_2})$.
\end{proof}
Replace $V_2$ by $\mathbb{R}$. As $C_{\mathbb{R}} = \lbrace 1 \rbrace$ and $R_{\mathbb{R}} = \emptyset$, we may conclude the following:  
\begin{corollary}\label{oneup}
	Let $(V, e)$ be an order unit space. Consider $\hat{V} = V \times \mathbb{R}$, $\hat{V}^+ = V^+ \times \mathbb{R}^+$ and $\hat{e} = (e, 1)$. Then $(\hat{V}, \hat{e})$ is an order unit space and we have 
	\begin{enumerate}
		\item $C_{\hat{V}} = (C_V \times [0, 1]) \bigcup ([0, e_2]_o \times \lbrace 1 \rbrace)$ and 
		\item $R_{\hat{V}} = (R_V \times [0, 1]) \bigcup (C_V \times \lbrace 0 \rbrace) \bigcup ((e - C_V) \times \lbrace 1 \rbrace)$. 
	\end{enumerate}
\end{corollary}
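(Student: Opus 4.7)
The plan is to derive the corollary as a direct specialization of the preceding lemma with $V_2 = \mathbb{R}$ and $e_2 = 1$, exactly as the sentence preceding the corollary instructs. The only work is to verify that $(\mathbb{R}, 1)$ fits the hypotheses of the lemma and to compute $C_{\mathbb{R}}$ and $R_{\mathbb{R}}$; the conclusion then follows by mechanical substitution, so I do not anticipate any substantive obstacle.

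First, I would observe that $(\mathbb{R}, 1)$ with positive cone $\mathbb{R}^+$ is evidently an order unit space --- the order unit norm coincides with the absolute value. Hence the preceding lemma applies to the product $\hat V = V \times \mathbb{R}$ with $\hat V^+ = V^+ \times \mathbb{R}^+$ and $\hat e = (e, 1)$, giving both that $(\hat V, \hat e)$ is an order unit space and explicit descriptions of $C_{\hat V}$ and $R_{\hat V}$.

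Next, I would identify the scalar ingredients. The definition $C_{\mathbb{R}} = \{t \in \mathbb{R}^+ : |t| = 1\}$ forces $C_{\mathbb{R}} = \{1\}$, and hence $1 - C_{\mathbb{R}} = \{0\}$. For the periphery, any $u \in R_{\mathbb{R}}$ must lie in $C_{\mathbb{R}} \cap (1 - C_{\mathbb{R}}) = \{1\} \cap \{0\} = \emptyset$, so $R_{\mathbb{R}} = \emptyset$. Along the way one also records that the order interval $[0, 1]_o$ in $\mathbb{R}$ is the usual unit interval $[0, 1]$.

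Finally, substituting these values into the two formulas of the preceding lemma, the summand involving $R_{\mathbb{R}}$ disappears, the factor $C_{\mathbb{R}}$ collapses to $\{1\}$, and the factor $1 - C_{\mathbb{R}}$ collapses to $\{0\}$. This yields precisely $(C_V \times [0, 1]) \cup ([0, e]_o \times \{1\})$ for $C_{\hat V}$ and $(R_V \times [0, 1]) \cup (C_V \times \{0\}) \cup ((e - C_V) \times \{1\})$ for $R_{\hat V}$, as claimed. The only residual care needed is bookkeeping in matching the four summands of the general product formula with the three surviving summands here.
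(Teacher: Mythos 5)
Your proposal is correct and follows exactly the route the paper takes: the paper's entire justification is the sentence ``Replace $V_2$ by $\mathbb{R}$. As $C_{\mathbb{R}} = \lbrace 1 \rbrace$ and $R_{\mathbb{R}} = \emptyset$, we may conclude the following,'' which is precisely your specialization of the product lemma together with the computation $R_{\mathbb{R}} = C_{\mathbb{R}} \cap (1 - C_{\mathbb{R}}) = \emptyset$. Your added care in noting that the surviving order-interval factor should read $[0,e]_o$ (the interval in $V$, not in the scalar factor) correctly flags what is evidently a typo in the statement.
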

Again using $C_1 = \lbrace 1 \rbrace$, $R_1 = \emptyset$ and following the induction on $n$, we can easily obtain the canopy and its periphery of $\ell_{\infty}^n$ with the help of Corollary \ref{oneup}.  
\begin{corollary}
	Fix $n \in \mathbb{N}$, $n \ge 2$. Put $C_n := C_{\ell_{\infty}^n}$ and $R_n := R_{\ell_{\infty}^n}$. Then 
	\begin{enumerate}
		\item $C_n = \lbrace (\alpha_1, \dots, \alpha_n): \min \lbrace \alpha_i \rbrace \ge 0 ~ \mbox{and} ~ \max \lbrace \alpha_i \rbrace = 1 \rbrace$ and
		\item $R_n = \lbrace (\alpha_1, \dots, \alpha_n): \min \lbrace \alpha_i \rbrace = 0 ~ \mbox{and} ~ \max \lbrace \alpha_i \rbrace = 1 \rbrace$.
	\end{enumerate}
\end{corollary}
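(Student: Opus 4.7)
The plan is to proceed by induction on $n \geq 2$, with Corollary \ref{oneup} as the single workhorse applied repeatedly. For the base case $n = 2$ I would apply the corollary to $(V, e) = (\mathbb{R}, 1)$. Since the only state on $\mathbb{R}$ sends $1 \mapsto 1$, we have $C_{\mathbb{R}} = \{1\}$ and $R_{\mathbb{R}} = \emptyset$, so Corollary \ref{oneup} immediately yields $C_2 = (\{1\} \times [0,1]) \cup ([0,1] \times \{1\})$ and $R_2 = (\{1\} \times \{0\}) \cup (\{0\} \times \{1\})$, matching the claimed descriptions.

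For the inductive step, assume the two descriptions hold for $n$ and apply Corollary \ref{oneup} to $(\ell_{\infty}^n, e_n)$ to obtain
\begin{align*}
C_{n+1} &= (C_n \times [0,1]) \cup ([0, e_n] \times \{1\}), \\
R_{n+1} &= (R_n \times [0,1]) \cup (C_n \times \{0\}) \cup ((e_n - C_n) \times \{1\}).
\end{align*}
The identification of $C_{n+1}$ is direct: tuples in the first piece have $\min \geq 0$ and, by the inductive description of $C_n$, $\max = 1$ realised among the first $n$ coordinates; tuples in the second piece lie in $[0, e_n]$ with $\alpha_{n+1} = 1$. Conversely, any $(\alpha_1,\dots,\alpha_{n+1})$ with $\min \geq 0$ and $\max = 1$ either has the maximum realised in the first $n$ slots (so $(\alpha_1,\dots,\alpha_n) \in C_n$) or has $\alpha_{n+1} = 1$ (so $(\alpha_1,\dots,\alpha_n) \in [0, e_n]$).

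For $R_{n+1}$ the key supporting fact, obtained from the inductive description of $C_n$, is
\[
e_n - C_n = \{ \beta \in [0, e_n] : \min_i \beta_i = 0 \}.
\]
Granting this, each of the three pieces in the union manifestly consists of $(n+1)$-tuples with $\min = 0$ and $\max = 1$. Conversely, given such a tuple, split on the value of $\alpha_{n+1} \in [0,1]$: if $\alpha_{n+1} \in (0,1)$, both extrema must be attained among the first $n$ coordinates, placing the tuple in $R_n \times [0,1]$; if $\alpha_{n+1} = 0$, the maximum is attained there, giving $C_n \times \{0\}$; and if $\alpha_{n+1} = 1$, the minimum is attained there, giving $(e_n - C_n) \times \{1\}$. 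The only part requiring care is this three-way case analysis and the subtraction identity for $e_n - C_n$; the remaining work is a routine transcription of the extremality conditions.
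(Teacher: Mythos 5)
Your proposal is correct and follows exactly the route the paper intends: the paper's proof is the single sentence ``using $C_1=\lbrace 1\rbrace$, $R_1=\emptyset$ and following the induction on $n$ \dots with the help of Corollary \ref{oneup}'', and you have simply filled in the routine verifications (including the identity $e_n-C_n=\lbrace \beta\in[0,e_n]:\min_i\beta_i=0\rbrace$ and the three-way split on $\alpha_{n+1}$), all of which check out.
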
 
\subsection{Adjoining a normed linear space to an order unit space} 
Let $(V, e)$ be an order unit space and let $X$ be a real normed linear space. Consider $V_X := V \oplus_1 X$ and put $V_X^+ = \lbrace (v, x): \Vert x \Vert e \le v \rbrace$ and $e_X = (e, 0)$. It was shown in \cite{K23} that $(V_X, V_X^+, e_X)$ is an order unit space in such a way that the order unit norm coincides with the $\ell_1$-norm on $V_X$. Here we describe the canopy and its periphery corresponding to $V_X$. For this purpose, we introduce the following notion.
\begin{definition}
	Let $(V, e)$ be an order unit space. Then $u \in V^+$ is said to be a \emph{semi-peripheral} element if $u = \alpha (e - w) + (1 - \alpha) w$ for some $w \in R_V$ and $\alpha \in [0, 1]$. When $\alpha = \frac 12$, then $u = \frac 12 e$ which is called the \emph{central semi-peripheral} element. The set of all semi-peripheral elements of $V$ is denoted by $R_V^S$.
\end{definition} 
\begin{theorem}\label{sp}
	Let $(V, V^+, e)$ be an order unit space and let $X$ a real normed linear space. Consider the corresponding order unit space $(V \oplus_1 X, V_X^+, e_X)$. Then the canopy and the periphery of $V \oplus_1 X$ are given by 
	$$C_{V \oplus_1 X} = \lbrace (u, x) \in V_X^+: \Vert x \Vert e \le u ~ \mbox{and} ~ \Vert u \Vert + \Vert x \Vert = 1 \rbrace$$ 
	and 
	$$R_{V \oplus_1 X} = \lbrace (u, x): u \in R_V^S ~ \mbox{and} ~  \Vert x \Vert + \Vert u \Vert = 1 \rbrace.$$
\end{theorem}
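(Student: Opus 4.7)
The plan is to prove the two identities in turn. The first follows essentially from the construction of $V_X$ and the recalled fact that the order unit norm on $V \oplus_1 X$ coincides with the $\ell_1$-norm. The second requires unpacking $R_{V_X} = C_{V_X} \cap (e_X - C_{V_X})$ and using Lemma \ref{periphery} to produce a genuine peripheral element of the smaller space $V$.

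For the canopy characterization, since $\Vert (u,x) \Vert_{V_X} = \Vert u \Vert + \Vert x \Vert$ for $(u,x) \in V_X^+$ (by \cite{K23}), we have $(u,x) \in C_{V_X}$ if and only if $(u,x) \in V_X^+$, i.e., $\Vert x \Vert e \le u$, and $\Vert u \Vert + \Vert x \Vert = 1$. This is the first stated identity.

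For the periphery, first note that $(u,x) \in R_{V_X}$ if and only if both $(u,x)$ and $e_X - (u,x) = (e - u, -x)$ lie in $C_{V_X}$. Applying the just-obtained canopy characterization to both gives the sandwich $\Vert x \Vert e \le u \le (1 - \Vert x \Vert) e$ together with the norm equalities $\Vert u \Vert = \Vert e - u \Vert = 1 - \Vert x \Vert$. Setting $\alpha := \Vert x \Vert$, the sandwich forces $\alpha \le 1/2$. When $\alpha = 1/2$ the sandwich collapses to $u = e/2$, the central semi-peripheral element. When $\alpha < 1/2$, I set $w := (1 - 2\alpha)^{-1}(u - \alpha e)$, so that $0 \le w \le e$ and $u = \alpha e + (1 - 2\alpha) w = \alpha (e - w) + (1 - \alpha) w$. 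To show $w \in R_V$, I will invoke Lemma \ref{periphery}(5): the two norm equalities provide states $f, g$ of $V$ with $f(u) = 1 - \alpha$ and $g(e - u) = 1 - \alpha$, i.e., $g(u) = \alpha$; a direct computation then yields $f(w) = 1$ and $g(w) = 0$, so $w \in C_V$ admits a state vanishing on it, i.e., $w \in R_V$. Hence $u \in R_V^S$. The converse reverses this construction: given $u = \alpha(e - w) + (1 - \alpha) w$ with $w \in R_V$ and $\alpha \in [0, 1/2]$ (WLOG, swapping $w$ with $e - w \in R_V$ if necessary), the sandwich $\alpha e \le u \le (1 - \alpha) e$ is immediate and $\Vert u \Vert = \Vert e - u \Vert = 1 - \alpha$ follow from the states of $V$ provided by applying Lemma \ref{periphery} to $w$. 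Combined with $\Vert x \Vert = 1 - \Vert u \Vert = \alpha$, these place both $(u, x)$ and $(e - u, -x)$ in $C_{V_X}$, so $(u, x) \in R_{V_X}$.

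The main obstacle is promoting the auxiliary element $w$ to a genuine peripheral element of $V$ rather than merely a canopy element. The sandwich alone only yields $0 \le w \le e$, and it is the norm identity $\Vert u \Vert = 1 - \alpha$ that lifts $\Vert w \Vert$ to $1$, while dually the identity $\Vert e - u \Vert = 1 - \alpha$ supplies the state annihilating $w$. Both norm equalities are therefore essential and emerge only by combining the canopy conditions for $(u, x)$ and $(e - u, -x)$ simultaneously.
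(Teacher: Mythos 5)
Your proof is correct and follows essentially the same route as the paper: the canopy identity is read off from the coincidence of the order unit norm with the $\ell_1$-norm, and for the periphery you form the same normalized element $w = (1-2\alpha)^{-1}(u - \alpha e)$ (the paper writes it as $(2\Vert u\Vert - 1)^{-1}(u - (1-\Vert u\Vert)e)$) and certify $w \in R_V$ via a state attaining $\Vert e - u\Vert$, exactly as in Lemma \ref{periphery}(5). The only cosmetic difference is your normalization $\alpha \le 1/2$ versus the paper's $\alpha \ge 1/2$ in the converse inclusion.
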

\begin{proof}
	Let $w \in R_V$, $\alpha \in [0, 1]$ and put $u = \alpha (e - w) + (1 - \alpha) w$. Then $u \in V^+$ and $\Vert u \Vert = \max \lbrace \alpha, 1 - \alpha \rbrace$. Let $x \in X$ with $\Vert x \Vert = 1 - \Vert u \Vert$. For definiteness, we assume that $\alpha \ge \frac 12$. Then $\Vert u \Vert = \alpha$ and $\Vert x \Vert = 1 - \alpha$. Now, as 
	\begin{eqnarray*}
		u - \Vert x \Vert e &=& \alpha (e - w) + (1 - \alpha) w - (1 - \alpha) e \\ 
		&=& (2 \alpha - 1) (e - w) \in V^+
	\end{eqnarray*} 
	and $\Vert (u, x) \Vert_1 = 1$, we have $(u, x) \in C_{V \oplus_1 X}$.
	Further, as $e - u = (1 - \alpha) (e - w) + \alpha w$, we have $\Vert e - u \Vert = \alpha$ so that $\Vert e_X - (u, x) \Vert_1 = 1$. Thus $(u, x) \in R_{V \oplus_1 X}$. 
		
	Conversely, we assume that $(u, x) \in R_{V \oplus_1 X}$. Then $(u, x), (e - u, - x) \in V_X^+$ and we have $\Vert (u, x) \Vert_1 = 1 = \Vert (e - u, - x) \Vert_1$. Thus $\Vert x \Vert e \le u$, $\Vert - x \Vert e \le e - u$ 
	and 
	$$\Vert u \Vert + \Vert x \Vert = 1 = \Vert e - u \Vert + \Vert - x \Vert.$$
	Now, it follows that $\Vert u \Vert = \Vert e - u \Vert = 1 - \Vert x \Vert$ whence $(1 - \Vert u \Vert) e \le u$. Thus 
	\begin{eqnarray*}
		\Vert u - (1 - \Vert u \Vert) e \Vert &=& \sup \lbrace \phi\left(u - (1 - \Vert u \Vert) e\right): \phi \in S(V) \rbrace \\ 
		&=& \sup \lbrace \phi(u): \phi \in S(V) \rbrace - (1 - \Vert u \Vert) \\ 
		&=& 2 \Vert u \Vert - 1.
	\end{eqnarray*}
	If $2 \Vert u \Vert - 1 = 0$, that is $\Vert u \Vert = \frac 12$, then $u = \frac 12 e$ and is a (centrally) semi-peripheral element, for $(1- \Vert u \Vert) e \le u \le \Vert u \Vert e$. So we may assume that $2 \Vert u \Vert - 1 > 0$. Put $w = \frac{u - (1 - \Vert u \Vert) e}{2 \Vert u \Vert - 1}$. Then $w \in V^+$ and consequently, $w \in C_V$.
	
	Find $\phi \in S(V)$ such that $f\phi(e - u) = \Vert e - u \Vert = \Vert u \Vert$. Then $\phi(w) = 0$ so that $w \in R_V$. Further, we have $u = \Vert x \Vert (e - w) + (1 - \Vert x \Vert) w$ for $\Vert x \Vert = 1 - \Vert u \Vert$. Thus $u$ is again a semi-peripheral element.
\end{proof}
\begin{remark}
	We can deduce from the proof of Theorem \ref{sp} that 
	$$R_V^S = \lbrace u \in [0, e]: \Vert u \Vert = \Vert e - u \Vert \rbrace = \lbrace u \in V: \Vert u \Vert = \Vert e - u \Vert \le 1 \rbrace.$$
\end{remark}

\section{Some applications}

\begin{lemma}\label{l2}
	Let $(V, e)$ be an order unit space of dimension $\ge 2$ (so that $R_V \ne \emptyset$). Let $u \in R_V$ and consider 
	$$P_u := \lbrace \alpha e + \beta u: \alpha, \beta \in \mathbb{R} \rbrace.$$ 
	Then $P_u$ is a unitally order isomorphic to $\ell_{\infty}^2$.
\end{lemma}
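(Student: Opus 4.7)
The plan is to exhibit an explicit linear bijection $\Phi : P_u \to \ell_\infty^2$ and check it is an order-and-unit isomorphism. Since $u \in R_V = (S_V)_0$ we have $u \notin \{0,e\}$, so $u \notin \mathbb{R}e$ (if $u = \lambda e$ with $u \in V^+$ and $\|u\| = 1$, then $\lambda = 1$, forcing $u = e$, a contradiction). Hence $\{e, u\}$, and equivalently $\{u, e-u\}$, is a basis of $P_u$. I will work with the basis $\{u, e-u\}$ because it is the one that makes the target cone transparent. Define
\[
    \Phi : P_u \to \ell_\infty^2, \qquad \Phi\bigl(\gamma u + \delta(e - u)\bigr) = (\gamma, \delta).
\]
Then $\Phi$ is a linear bijection with $\Phi(e) = \Phi(u + (e-u)) = (1,1)$, so $\Phi$ sends the distinguished order unit of $P_u$ to the distinguished order unit of $\ell_\infty^2$.

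Next I verify that $\Phi$ is a bipositive map, i.e., $\gamma u + \delta(e-u) \in V^+$ if and only if $\gamma, \delta \ge 0$. The easy direction is immediate: if $\gamma, \delta \ge 0$, then $\gamma u + \delta(e-u)$ is a non-negative combination of the two cone elements $u, e-u \in V^+$. For the converse, suppose $\gamma u + \delta(e-u) \in V^+$. Since $u \in (S_V)_0$, Lemma \ref{periphery} supplies a state $f \in S(V)$ with $f(u) = 0$, hence $f(e-u) = 1$; applying the same lemma to $e - u \in (S_V)_0$ yields a state $g \in S(V)$ with $g(e-u) = 0$, hence $g(u) = 1$. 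Evaluating both states on $\gamma u + \delta(e-u)$ gives
\[
    \delta = f\bigl(\gamma u + \delta(e-u)\bigr) \ge 0, \qquad \gamma = g\bigl(\gamma u + \delta(e-u)\bigr) \ge 0.
\]
Thus $\Phi$ restricted to $P_u^+ := P_u \cap V^+$ is a bijection onto $(\mathbb{R}^+)^2$, which is exactly the positive cone of $\ell_\infty^2$.

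Finally, a linear bijection between two order unit spaces which carries order unit to order unit and positive cone onto positive cone is automatically an isometric unital order isomorphism, since the order unit norm is determined by the cone and the order unit. The main (very mild) obstacle is just the existence of the two states separating $u$ from $e - u$; this is exactly what Lemma \ref{periphery} gives, together with the symmetric observation that $e - u \in (S_V)_0$ whenever $u \in (S_V)_0$. No further calculation is needed, and the identification $P_u \cong \ell_\infty^2$ follows.
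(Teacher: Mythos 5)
Your proof is correct and follows essentially the same route as the paper's: the paper's map $\chi(\alpha e+\beta u)=(\alpha,\alpha+\beta)$ is exactly your $\Phi$ written in the basis $\lbrace e,u\rbrace$ instead of $\lbrace u,e-u\rbrace$ (up to swapping the two coordinates of $\ell_\infty^2$), and both arguments hinge on producing the two states killing $u$ and $e-u$ respectively via Lemma \ref{periphery}. Your closing remark that the map is automatically isometric is also sound and corresponds to Remark \ref{norm} in the paper.
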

\begin{proof}
	Consider the mapping $\chi: P_u \to \ell_{\infty}^2$ given by $\chi(\alpha e + \beta u) = (\alpha, \alpha + \beta)$ for all $\alpha, \beta \in \mathbb{R}$. Then $\chi$ is a unital bijection. We show that $\chi$ is an order isomorphism. We first assume that $\alpha e + \beta u \in V^+$. Since $u \in R_V$, we can find $f, g \in S(V)$ such that $f(u) = 1$ and $g(u) = 0$. Thus $0 \le f(\alpha e + \beta u) = \alpha + \beta$ and $0 \le g(\alpha e + \beta u) = \alpha$. Thus $(\alpha, \alpha + \beta) \in \ell_{\infty}^{2 +}$. Conversely, we assume that $(\alpha, \alpha + \beta) \in \ell_{\infty}^{2 +}$. Then $\alpha e + \beta u = \alpha (e - u) + (\alpha + \beta) u \in V^+$. 
\end{proof}
\begin{remark}\label{norm}
	As $u \in R_V$, we have $u \perp_{\infty} (e - u)$. Thus, it is simple to show that $\chi$ is an isometry. In fact, if $v \in P_u$, say $v = \alpha e + \beta u = \alpha (e - u) + (\alpha + \beta) u$,  then  
	$$\Vert v \Vert = \Vert \alpha (e - u) + (\alpha + \beta) u \Vert = \max \lbrace \vert \alpha \vert, \vert \alpha + \beta \vert \rbrace.$$ 
\end{remark} 

\begin{theorem}
	Let $(V, e)$ be an order unit space with $\dim(V) \ge 2$. Then $V$ contains a copy of $\ell_{\infty}^2$ as an order unit subspace. Moreover, $V$ contains a copy of $\ell_{\infty}^n$ ($n \ge 2$) as an order unit subspace if and only if there exist $u_1, \dots, u_n \in R_V$ such that $u_i \perp_{\infty} u_j$ for all $i, j \in \lbrace 1, \dots, n \rbrace$ with $i \ne j$, $\sum_{i=1}^{n} u_i = e$ and $\perp_{\infty}$ is additive in the linear span of $u_1, \dots, u_n$. 
\end{theorem}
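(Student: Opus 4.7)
The first assertion reduces to Lemma \ref{l2} once we exhibit a peripheral element. Since $V = V^+ - V^+$ and $\dim V \ge 2$, there exists $v \in V^+$ with $v \notin \mathbb{R}e$; then $v/\Vert v \Vert \in C_V \setminus \lbrace e \rbrace$, and Proposition \ref{1}(1)(d) produces $\bar{v} \in L(v) \cap R_V$. Applying Lemma \ref{l2} to $\bar{v}$ yields $P_{\bar{v}} \cong \ell_{\infty}^2$ inside $V$.

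For the ``only if'' direction of the characterization, suppose $\psi \colon \ell_{\infty}^n \to V$ is a unital, order-isomorphic embedding onto an order unit subspace of $V$. Such a $\psi$ is automatically isometric, since the order unit norm is determined by the cone and the unit. Setting $u_i := \psi(e_i)$ for the standard basis of $\ell_{\infty}^n$, the four required conditions transfer directly: positivity of $\psi$ gives $u_i \in V^+$; unitality gives $\sum_i u_i = e$; the isometry combined with a direct sup-norm computation gives $\Vert u_i \Vert = \Vert e - u_i \Vert = 1$ (so $u_i \in R_V$), the pairwise $\perp_{\infty}$-orthogonality of the $u_i$'s, and the additivity of $\perp_{\infty}$ on $\mathrm{span}\lbrace u_1, \dots, u_n \rbrace$, each fact being trivially verified inside $\ell_{\infty}^n$ for the standard basis.

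For the ``if'' direction, define $\psi \colon \ell_{\infty}^n \to V$ by $\psi(\alpha_1, \dots, \alpha_n) = \sum_{i=1}^n \alpha_i u_i$. Since each $u_i \in R_V \subset C_V$, Lemma \ref{periphery} supplies states $f_i$ of $V$ with $f_i(u_i) = 1$. The identity
\[ 1 = f_i(e) = f_i(u_i) + \sum_{j \ne i} f_i(u_j), \]
together with $f_i(u_j) \ge 0$, forces $f_i(u_j) = 0$ for all $j \ne i$. Hence $f_i \circ \psi$ is the $i$-th coordinate projection on $\ell_{\infty}^n$, which shows that $\psi$ is injective and that $\psi(\alpha) \in V^+$ implies $\alpha_i \ge 0$ for every $i$. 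Since $\psi$ is plainly unital and sends the positive cone of $\ell_{\infty}^n$ into $V^+$, it is a unital order isomorphism onto $W := \psi(\ell_{\infty}^n)$ equipped with the inherited cone $W \cap V^+$ and order unit $e$; the automatic isometry then upgrades this to an identification of order unit spaces, exhibiting the desired copy of $\ell_{\infty}^n$.

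The main work is concentrated in the backward direction, where the states $f_i$ are the crucial tool: their existence, guaranteed because each $u_i$ is peripheral, lets us read off the coordinates $\alpha_i$ linearly and simultaneously verifies injectivity of $\psi$ and positivity of $\psi^{-1}$ on the image. It is worth noting that the additivity of $\perp_{\infty}$ listed in the hypothesis, although genuinely delivered by the forward direction, is not explicitly invoked in the backward construction above — it surfaces as a consequence of the resulting isometric identification with $\ell_{\infty}^n$.
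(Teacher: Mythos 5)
Your proof is correct, and it follows the same overall skeleton as the paper (forward direction via the coordinate functionals, backward direction via the linear map on the span of the $u_i$), but the backward direction runs on a genuinely different engine. The paper uses the additivity hypothesis to show that $\Phi(\sum_i \alpha_i u_i) = (\alpha_i)$ is a unital surjective isometry and then invokes the fact that such isometries of order unit spaces are order isomorphisms; you instead produce norming states $f_i$ with $f_i(u_j) = \delta_{ij}$ (which, as you say, exist because $\sum_i u_i = e$ forces $f_i(u_j)=0$ for $j \ne i$ once $f_i(u_i)=1$) and verify bipositivity of $\psi$ directly, recovering the isometry afterwards from the fact that the order unit norm is determined by the cone and the unit. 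Your closing observation is the substantive dividend of this change of route: the additivity of $\perp_{\infty}$ (and indeed the pairwise $\perp_{\infty}$-orthogonality and membership in $R_V$) are not needed as hypotheses for sufficiency, since they follow automatically from $u_i \in C_V$ and $\sum_i u_i = e$ via the states $f_i$; the theorem as stated remains a correct equivalence because the forward direction shows these conditions are necessary, but your argument shows the stated sufficient condition can be weakened. A similar efficiency appears in the forward direction, where the paper extends the pure states $f_i \circ \Gamma^{-1}$ from $W$ to $V$ (a positive extension step), whereas you read everything off the automatic isometry of a unital order isomorphism; both are legitimate, though it would be worth stating explicitly the folklore fact you rely on, namely that for an order unit subspace $W \ni e$ the order unit norm of $(W, W \cap V^+, e)$ coincides with the restriction of $\Vert\cdot\Vert_V$, so that ``unital order isomorphism'' really does imply ``isometry into $V$.''
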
 
\begin{proof}
	It follows from Lemma \ref{l2} that $V$ contains a copy of $\ell_{\infty}^2$ as an order unit subspace. 
	
	Next, we assume that $W$ is an order unit subspace of $V$ and $\Gamma: \ell_{\infty}^n \to W$ is a surjective unital order isomorphism. Put $\gamma(e_i) = u_i$ for $i = 1, \dots, n$ where $\lbrace e_1, \dots, e_n \rbrace$ is the standard unit basis of $\ell_{\infty}^n$. Then $u_1, \dots, u_n \in C_V$ with $\sum_{i=1}^{n} u_i = e$. Consider the biorthonormal system $\lbrace f_1, \dots, f_n \rbrace$ in $\ell_1^n$ so that $f_i(e_j) = \delta_{ij}$. Then $\lbrace f_1 \circ \Gamma^{-1}, \dots, f_n \circ \Gamma^{-1} \rbrace$ is the set of pure states of $W$. We can extend $f_i \circ \Gamma^{-1}$ to a pure state $g_i$ of $V$ for each $i = 1, \dots, n$. Then $g_i(u_j) = \delta_{ij}$ so that $u_1, \dots, u_n \in R_V$ and we have $u_i \perp_{\infty} u_j$ for all $i, j \in \lbrace 1, \dots, n \rbrace$ with $i \ne j$. Also, if $\alpha_1, \dots, \alpha_n \in \mathbb{R}$, then 
	\begin{eqnarray*}
		\left\Vert \sum_{i=1}^{n} \alpha_i u_i \right\Vert &=& \left\Vert \sum_{i=1}^{n} \alpha_i e_i \right\Vert_{\infty}\\ &=& \max \lbrace \vert \alpha_i \vert: 1 \le i \le n \rbrace\\ &=& \max \lbrace \Vert \alpha_i u_i \Vert: 1 \le i \le n \rbrace.
	\end{eqnarray*} 
	Thus $\perp_{\infty}$ is additive in the linear span of $u_1, \dots, u_n$. 
	
	Conversely, we assume that there exist $u_1, \dots, u_n \in R_V$ such that $u_i \perp_{\infty} u_j$ for all $i, j \in \lbrace 1, \dots, n \rbrace$ with $i \ne j$, $\sum_{i=1}^{n} u_i = e$ and $\perp_{\infty}$ is additive in the linear span $U$ of $u_1, \dots, u_n$. Define $\Phi: U \to \ell_{\infty}^n$ by $\Phi( \sum_{i=1}^{n} \alpha_i u_i) = (\alpha_i)$. Then $\Phi$ is a unital linear bijection. Also 
	\begin{eqnarray*}
		\left\Vert \Phi \left( \sum_{i=1}^{n} \alpha_i u_i \right) \right\Vert &=& \Vert (\alpha_i) \Vert_{\infty} \\ &=& \max \lbrace \vert \alpha_i \vert: 1 \le i \le n \rbrace\\ &=& \max \lbrace \Vert \alpha_i u_i \Vert: 1 \le i \le n \rbrace \\ &=& \left\Vert \sum_{i=1}^{n} u_i e_i \right\Vert
	\end{eqnarray*} 
	as $\perp_{\infty}$ is additive on $U$. Thus that $\Phi$ is an isometry. Now, being a unital linear surjective isometry, $\Phi$ is a unital order isomorphism.
\end{proof}
\begin{remark}
	An order unit space of dimension $2$ is unitally isometric to $\ell_{\infty}^2$. However, we shall show in a forthcoming paper that for the dimension greater than $2$, there exist non-isometric order unit spaces of the same dimension.
\end{remark}
\begin{proposition}\label{disj} 
	Let $(V, e)$ be an order unit space and let $u, v \in R_V$. Then $u \ne v$ if and only if $[e, u] \bigcap [e, v] = \lbrace e \rbrace$.
\end{proposition}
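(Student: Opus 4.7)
The plan is to handle the two implications separately. The ``if'' direction is essentially free via the contrapositive: if $u = v \in R_V$, then since $\|e - u\| = 1 \neq 0$ we have $u \ne e$, so $u$ itself is a point of $[e, u] \cap [e, v]$ distinct from $e$.

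For the ``only if'' direction, the quickest route is to recycle the paper's own machinery. By Theorem \ref{oskel}, $S_V = R_V \cup \{0, e\}$ is a skeleton in $V$ with head $e$ and periphery $(S_V)_0 = R_V$. Hence distinct elements $u, v \in R_V$ are distinct members of the periphery of a skeleton, and Corollary \ref{intrr} applies verbatim to conclude $[e, u] \cap [e, v] = \{e\}$. If that feels like too heavy a hammer for such a concrete statement, a self-contained state-theoretic argument is equally short.

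The direct argument I would write goes as follows. Take any $w \in [e, u] \cap [e, v]$ and write $w = (1-\alpha)e + \alpha u = (1-\beta)e + \beta v$ with $\alpha, \beta \in [0, 1]$. Rearranging gives the key identity
\[
\alpha u - \beta v = (\alpha - \beta)\, e.
\]
A quick check using $u, v \notin \mathbb{R} e$ (a consequence of $\|e - u\| = \|e - v\| = 1$) shows that if $w \ne e$ then $\alpha, \beta > 0$. By Lemma \ref{periphery}, choose a state $f$ on $V$ with $f(u) = 0$; evaluating the displayed identity at $f$ yields $-\beta f(v) = \alpha - \beta$, and since $f(v) \ge 0$ and $\beta > 0$ this forces $\alpha \le \beta$. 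Swapping the roles of $u$ and $v$ (picking a state annihilating $v$) gives the reverse inequality, so $\alpha = \beta$; then $\alpha(u - v) = 0$ with $\alpha > 0$ yields $u = v$.

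The only mild obstacle is dispatching the boundary cases $\alpha = 0$ or $\beta = 0$ cleanly before the state argument begins; both collapse to $w = e$ the moment one uses $u, v \ne e$. Everything else is bookkeeping.
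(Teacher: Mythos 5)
Your proposal is correct, and both of the routes you sketch go through. The overall skeleton of your direct argument is the same as the paper's --- parametrize a common point $w\ne e$ as $w=(1-\alpha)e+\alpha u=(1-\beta)e+\beta v$ with $\alpha,\beta>0$, show the two parameters coincide, and conclude $u=v$ --- but the mechanism for forcing $\alpha=\beta$ differs. The paper rearranges to $\alpha(e-u)=\beta(e-v)$ and simply takes order unit norms: since $u,v\in R_V$ gives $\Vert e-u\Vert=\Vert e-v\Vert=1$, the identity $\alpha=\beta$ drops out in one line with no states needed. You instead evaluate $\alpha u-\beta v=(\alpha-\beta)e$ against states annihilating $u$ and $v$ (whose existence is Lemma~\ref{periphery}), getting the two inequalities $\alpha\le\beta$ and $\beta\ge\alpha$ by positivity. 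Both are equally short; the norm argument is marginally more economical, while your state argument makes visible exactly which feature of the periphery is being used (the vanishing of a state, i.e.\ membership in the boundary of $V^+$) rather than the normalization $\Vert e-u\Vert=1$. Your first suggested route --- identifying $R_V$ with the periphery $(S_V)_0$ of the skeleton $S_V$ via Theorem~\ref{oskel} and quoting Corollary~\ref{intrr} --- is also legitimate and is, as you say, a heavier hammer than the statement warrants; the paper opts for the self-contained computation. Your handling of the converse direction and of the boundary cases $\alpha=0$ or $\beta=0$ (which force $w=e$ outright) is fine.
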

\begin{proof}
	First, let $w \in [e, u] \bigcap [e, v]$ with $w \ne e$.As $w \in [e, u] \bigcap [e, v]$, we can find $0 < \lambda, \mu \le 1$ such that $w = e - \lambda (e - u) = e - \mu (e - v)$. Then $\lambda (e - u) = \mu (e - v)$. Since $u, v \in R_V$, we have $e - u, e - v \in C_V$. Thus $\Vert e - u \Vert = 1 = \Vert e - v \Vert$ so that $\lambda = \mu$. As $\lambda, \mu > 0$, we get $u = v$. Thus $u \ne v$ implies $[e, u] \bigcap [e, v] = \lbrace e \rbrace$. Evidently, $u = v$ implies $[e, u] \bigcap [e, v] = [e, u] \ne \lbrace e \rbrace$. 
\end{proof}
\begin{remark}
	We note that $C_V = \bigcup_{u \in R_V} [0, u]$ is a disjoint union of \emph{untangled strings} $[0, u]$'s of $K_V$ attached to $e$. Consequently, $e - C_V = \bigcup_{u \in R_V} [0, u]$ is a disjoint union of \emph{untangled strings} $[0, u]$'s of $K_V$ attached to $0$ as well. 
\end{remark}
\begin{proposition}\label{uniqueu}
	Let $(V, e)$ be an order unit space and let $u, v \in R_V$. Then, either $P_u = P_v$ or $P_u \bigcap P_v = \mathbb{R} e$.
\end{proposition}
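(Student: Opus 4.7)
The plan is a short linear-algebraic argument: since $P_u$ and $P_v$ are both two-dimensional linear subspaces of $V$ containing the one-dimensional subspace $\mathbb{R} e$, their intersection has dimension $1$ or $2$, which gives exactly the required dichotomy. So my first step is to check that $P_u$ is genuinely $2$-dimensional for $u \in R_V$, i.e.\ that $\lbrace e, u \rbrace$ is linearly independent. If $u = \lambda e$ for some $\lambda \in \mathbb{R}$, then $\Vert u \Vert = \vert \lambda \vert$ and $\Vert e - u \Vert = \vert 1 - \lambda \vert$; since $u \in R_V$ forces both equal to $1$, and the simultaneous equations $\vert \lambda \vert = 1 = \vert 1 - \lambda \vert$ have no real solution, we have $u \notin \mathbb{R} e$. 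Hence $\dim P_u = 2$, and likewise $\dim P_v = 2$, with $\mathbb{R} e \subset P_u \cap P_v$.

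Next, suppose $P_u \cap P_v \neq \mathbb{R} e$, and pick $w \in (P_u \cap P_v) \setminus \mathbb{R} e$. Then $w = \alpha e + \beta u = \gamma e + \delta v$ for some reals, and necessarily $\beta \neq 0$ and $\delta \neq 0$ (otherwise $w$ would lie in $\mathbb{R} e$, using $u, v \notin \mathbb{R} e$). Solving for $v$ yields
\[
v = \frac{\alpha - \gamma}{\delta} e + \frac{\beta}{\delta} u \in P_u,
\]
so $P_v = \mathrm{span}\lbrace e, v \rbrace \subset P_u$. By the dimension count from the first step, $P_u = P_v$. The complementary case, $P_u \cap P_v = \mathbb{R} e$, is the only remaining option.

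There is no real obstacle here beyond the initial observation that peripheral elements are not proportional to $e$; once that is in hand, the argument is purely a two-dimensional intersection computation and closely mirrors the style already used in Proposition \ref{disj} and in Lemma \ref{1}(2).
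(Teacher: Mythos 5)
Your proof is correct, and it takes a genuinely different and more elementary route than the paper's. You observe that $P_u=\mathrm{span}\lbrace e,u\rbrace$ and $P_v=\mathrm{span}\lbrace e,v\rbrace$ are two-dimensional subspaces containing the common line $\mathbb{R}e$ (the only order-theoretic input being that a peripheral element cannot be a scalar multiple of $e$, which you verify correctly from $\Vert u\Vert=\Vert e-u\Vert=1$), so the dichotomy is just the statement that two planes through a common line either coincide or meet exactly in that line; your explicit computation solving for $v$ in terms of $e$ and $u$ makes this airtight. The paper instead stays inside the positive cone: it normalizes a point $w\in(P_u\cap P_v)\setminus\mathbb{R}e$, replaces it by $\frac12(e+w)\in C_V$, uses the characterization of when $\alpha e+\beta u$ lies in $C_V$ to place $w$ in $[e,u]\cup[e,e-u]$ and in $[e,v]\cup[e,e-v]$, and then invokes Proposition \ref{disj} to conclude the sharper fact that $u=v$ or $u=e-v$. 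That extra information (identifying exactly which peripheral elements generate the same plane, in particular $P_w=P_{e-w}$) is what gets used in Theorem \ref{ourep}; your argument does not deliver it, but it is not needed for the proposition as stated, and for that statement your proof is shorter and more transparent.
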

\begin{proof}
	Let $w \in P_u \bigcap P_v$ be such that $w \notin \mathbb{R} e$. Without any loss of generality, we assume that $\Vert w \Vert = 1$. Consider $w_1 := \frac 12 (e + w)$. Then $w_1 \in V^+$. Also then $w_1 \in P_u \bigcap P_v$. So, without any loss of generality again, we further assume that $w \in V^+$, that is, $w \in C_V$. 
	
	Since $w \in P_u$, we have $w = \alpha e + \beta u$ for some $\alpha, \beta \in \mathbb{R}$. As $w \in C_V$, we have $\alpha \ge 0$, $\alpha + \beta \ge 0$ and $\max \lbrace \alpha, \alpha + \beta \rbrace = 1$. Since $w \notin \mathbb{R} e$, we have $\beta \ne 0$. If $\beta > 0$, then we have $1 = \alpha + \beta > \alpha \ge 0$. Thus $w = \alpha e + (1 - \alpha) u \in [e, u]$. Next if $\beta < 0$, then $1 = \alpha > \alpha + \beta \ge 0$ so that $- 1 \le \beta < 0$. Thus $w = e + \beta u = e - (- \beta) (e - (e - u)) \in [e, e - u]$. Summing up, we have $w \in [e, u] \bigcup [e, e - u]$. Similarly, as $w \in P_v$, we also have $w \in [e, v] \bigcup [e, e - v]$. Thus $w \in \left( [e, u] \bigcup [e, e - u] \right) \bigcap \left( [e, v] \bigcup [e, e - v] \right)$. Since $w \ne e$, using Proposition \ref{disj}, we conclude that one of the equalities $[e, u] = [e, v]$ or $[e, u] = [e, e - v]$, or $[e, e - u] = [e, v]$, or $[e, e - u] = [e, e - v]$ hold. In other words, either $u = v$ or $u = e - v$. In both the situations, we have $P_u = P_v$. 
\end{proof} 
\begin{theorem}\label{ourep}
	Let $(V, e)$ be an order unit space with $\dim V \ge 2$. Then $V = \bigcup \lbrace P_u: u \in R_V \rbrace$ in such a way that $\bigcap \lbrace P_u: u \in R_V \rbrace = \mathbb{R} e$ and if $v \in V$ with $v \notin \mathbb{R} e$, then there exists a unique $w \in R_V$ such that $v \in P_w = P_{(e - w)}$.
\end{theorem}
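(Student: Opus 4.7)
My plan is to prove Theorem~\ref{ourep} in three parts: the covering $V = \bigcup_{u \in R_V} P_u$, the uniqueness of the pair $\{w, e - w\}$ associated to each $v \notin \mathbb{R}e$, and the identity $\bigcap_{u \in R_V} P_u = \mathbb{R}e$. The last two parts will reduce quickly to Proposition~\ref{uniqueu} together with the trivial identity $P_u = P_{e-u}$, so the substantive work lies in the covering step.

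For the covering, given $v \in V \setminus \mathbb{R}e$ I will set
\[
m_- := \sup\{k \in \mathbb{R} : ke \le v\}, \qquad m_+ := \inf\{k \in \mathbb{R} : v \le ke\}.
\]
Since $e$ is an Archimedean order unit, both scalars are finite, and by the standard duality between the order-unit norm and the state space they coincide with $\inf_{f \in S(V)} f(v)$ and $\sup_{f \in S(V)} f(v)$ respectively. Because states separate points of $V$ and $v \notin \mathbb{R}e$, the function $f \mapsto f(v)$ is not constant on $S(V)$, so $m_- < m_+$. I then set
\[
w := \frac{v - m_- e}{m_+ - m_-},
\]
so that $v = m_- e + (m_+ - m_-) w \in P_w$. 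A direct computation gives $\inf_f f(w) = 0$ and $\sup_f f(w) = 1$, hence $w \in C_V$ and some state realizes $f(w) = 0$ by $w^*$-compactness of $S(V)$. The implication $(5) \Rightarrow (1)$ in Lemma~\ref{periphery} then places $w$ in $R_V$.

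For uniqueness, if $v \in P_u \cap P_{u'}$ with $u, u' \in R_V$ and $v \notin \mathbb{R}e$, Proposition~\ref{uniqueu} excludes the alternative $P_u \cap P_{u'} = \mathbb{R}e$ and forces $P_u = P_{u'}$. Since Lemma~\ref{l2} identifies $P_u$ unitally and order-isomorphically with $\ell_\infty^2$, whose periphery consists of exactly the two elements corresponding to $u$ and $e - u$, one has $R_V \cap P_u = \{u, e-u\}$, so $u' \in \{u, e-u\}$. The identity $\alpha e + \beta u = (\alpha + \beta) e - \beta (e - u)$ confirms $P_u = P_{e-u}$ in general, and this gives the asserted uniqueness up to the involution $w \leftrightarrow e - w$.

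Finally, $\mathbb{R}e \subseteq \bigcap_{u \in R_V} P_u$ is immediate. Conversely, any $v \in \bigcap_{u \in R_V} P_u$ outside $\mathbb{R}e$ would, by the uniqueness just established, force $R_V \subseteq \{w, e - w\}$ for the particular $w$ produced from $v$, which is incompatible with $\dim V \ge 3$; the marginal case $\dim V = 2$ reduces to $V \cong \ell_\infty^2$, where $P_u = V$ for the unique peripheral pair. The main obstacle throughout is the covering step: I have to verify that the geometric construction of $w$ as the scaled translate of $v$ really lands in the periphery, and this is exactly where Lemma~\ref{periphery}(5) does the work, translating attainment of the infimum of $f(w)$ over states into membership $w \in R_V$.
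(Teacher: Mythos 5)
Your proof is correct and follows essentially the same route as the paper: you affinely rescale $v$ so that it lands in $R_V$, certify membership via a state vanishing on it (Lemma~\ref{periphery}), and get uniqueness from Proposition~\ref{uniqueu} --- indeed your $w = (v - m_- e)/(m_+ - m_-)$ is exactly $e$ minus the paper's $w = \Vert e - v \Vert^{-1}(e - v)$, so the two constructions produce the same plane $P_w$. Your treatment of the intersection claim is in fact more careful than the paper's, which never addresses it: as you observe, $\bigcap_{u \in R_V} P_u = \mathbb{R}e$ genuinely fails when $\dim V = 2$, where $P_u = V$ for the single peripheral pair.
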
 
\begin{proof}
	Let $v \in V$ with $v \notin \mathbb{R} e$. For simplicity, we assume that $\Vert v \Vert = 1$. Put $v_1 = \frac 12 (e + v)$ and $v_2 = \frac 12 (e - v)$. Then $v_1, v_2 \in V^+$ and $v_1, v_2 \notin \mathbb{R} e$. Also then $v = v_1 - v_2$ and $1 = \Vert v \Vert = \max \lbrace \Vert v_1 \Vert, \Vert v_2 \Vert \rbrace$. Replacing $v$ by $- v$, if required, we further assume that $0 < \Vert v_2 \Vert \le \Vert v_1 \Vert = 1$, that is, $\Vert e + v \Vert = 2$. Thus we can find $f \in S(V)$ such that $1 + f(v) = f(e + v) = 2$. Therefore, $f(v) = 1$. Put $w = \Vert e - v \Vert^{-1} (e - v)$. Then $w \in C_V$. Further, $f(e - w) = 1 - \Vert e - v \Vert^{-1} (1 - f(v)) = 1$ so that $e - w \in C_V$. Thus $w \in R_V$. Now $v = e - \Vert e - v \Vert w$ so that $v \in P_w = P_{(e - w)}$.  
	
	Uniqueness of $w$ follows from Proposition \ref{uniqueu}.
\end{proof}
\begin{remark}
	Let $v \in V$ with $v \notin \mathbb{R} e$. Find $f \in S(V)$ such that $\Vert v \Vert = \vert f(v) \vert$. If $\Vert v \Vert = f(v)$, then $\Vert e + \Vert v \Vert^{-1} v \Vert = 2$. Thus $v$ has a unique representation $v = \lambda e + \mu w$ in $P_w$ where $w = \Vert \Vert v \Vert e - v \Vert^{-1} (\Vert v \Vert e - v) \in R_V$ and $\lambda = \Vert v \Vert$ and $\mu = - \Vert \Vert v \Vert e - v \Vert$. When $\Vert v \Vert = - f(v)$, we replace $v$ by $- v$. 
\end{remark}

\thanks{{\bf Acknowledgements:} 
	The author was partially supported by Science and Engineering Research Board, Department of Science and Technology, Government of India sponsored  Mathematical Research Impact Centric Support project (reference no. MTR/2020/000017).}

\end{document}